%% ==============================================================
%% WARNING! ENGLISH SPEAKING AUTHORS SHOULD READ gretsien.tex
%%          FILE INSTEAD
%% ==============================================================
%% EXEMPLE DE CONTRIBUTION AU GRETSI
%% POUR LES UTILISATEURS FRANCOPHONES DE LaTeX2e

%\documentclass{IEEEtran}
%\documentclass[final,5p,times,twocolumn]{elsart}
\documentclass[a4paper,12pt]{article}

\addtolength{\voffset}{-1.5cm}
\addtolength{\textheight}{2cm}
\addtolength{\hoffset}{-1cm}
\addtolength{\textwidth}{2cm}

\usepackage{graphicx}
\usepackage{graphics} % for pdf, bitmapped graphics files
\usepackage{epsfig} % for postscript graphics files
\usepackage{times} % assumes new font selection scheme installed
\usepackage{amsmath} % assumes amsmath package installed
\usepackage{amsfonts}  % assumes amsmath package installed
\usepackage{amsthm}

\newcommand{\Pp}{{\text{Poincar\'e disk}}}

\newcommand{\RR}{{\mathbb R}}
\newcommand{\NN}{{\mathbb N}}

\newcommand{\abs}[1]{\lvert#1\rvert}
\newcommand{\norm}[1]{\lVert#1\rVert}

\newcommand{\dotex}{{\frac{d}{dt}}}

\newcommand{\tr}[1]{\text{Tr}\left(#1\right)}

\newcommand{\FixedRank}{S_+(r,n)}

\newcommand{\OG}[1]{{\mathcal{O}({#1})}}

\newcommand{\Sym}{{\mathrm{Sym}}}

\newcommand{\set}[2]{{\{{#1}\ \mathrm{s.t.}\ {#2}\}}}

\newtheorem{thm}{Theorem}
\newtheorem{prop}{Proposition}

\newtheorem{lem}{Lemma}

    \date{}  % include this line if your document contains figures
%===============================================================================
\begin{document}

\title{Stochastic gradient descent on Riemannian manifolds}
\author{S. Bonnabel
\thanks{Robotics lab, Math\'ematiques et syst\`emes,  Mines ParisTech, 75272 Paris CEDEX, France (e-mail: silvere.bonnabel@ mines-paristech.fr).}}

\maketitle

\begin{abstract}Stochastic gradient descent is a simple approach to find the local minima of a cost function whose evaluations are corrupted by noise. In this paper, we develop a procedure extending stochastic gradient descent algorithms to the case where the function is defined  on  a Riemannian manifold. We prove that, as in the Euclidian case, the gradient descent algorithm converges to a critical point of the cost function.  The algorithm has numerous potential applications, and is   illustrated here by four examples. In particular a novel gossip algorithm on the set of covariance matrices is derived and tested numerically.
\end{abstract}

\section{Introduction}

Stochastic approximation provides a simple approach, of great
practical importance,  to find the local minima of a function whose
evaluations are corrupted by noise. It has had a long history in
optimization  and control with numerous applications (e.g. \cite{ljung,benveniste,tsitsiklis}). To demonstrate the
main ideas on a toy example,  we briefly mention  a traditional
procedure to optimize the ballistic trajectory of a projectile in a
fluctuating wind. Successive gradient corrections (i.e. corrections
proportional to the distance between the projectile impact and the target)
are performed on the angle at which the projectile is launched. With
a decreasing step size tending to zero, one can reasonably hope the
launching angle will  converge to a fixed value which is such that the corresponding impacts are centered on the target {on average}.  One of the first
formal algorithm of this kind is the Robbins-Monro algorithm \cite{robbins}, which
dates back to the 1950s. It proves that for a smooth cost function
$C(w)$ having a unique minimum, the algorithm $w_{t+1}=w_t-\gamma_t
h_t(w_t)$, where $h_t(w_t)$ is a noisy evaluation of the gradient of
$C$ at $w_t$, converges in quadratic mean to the minimum, under
specific conditions on the sequence $\gamma_t$.

Although stochastic gradient has found applications  in control, system identification,  and filtering theories  (for instance  a Kalman filter for  noisy observations of a constant process is a Robbins-Monro algorithm),  new challenging applications  stem from the active machine learning community. The work of L. Bottou, a decade  ago \cite{bottou-98}, has popularized the stochastic gradient approach, both to address the online learning problem (identification of a constant parameter in real time from noisy output measurements) and large-scale learning (with ever-increasing data-sets, approximating the cost function with a simpler appropriate stochastic function can lead to a reduced numerical complexity). Some recent  problems have been strong drivers for the development of new estimation methods, such as the one proposed in the present paper, dealing with stochastic gradient descent on manifolds.

The paper is organized as follows. In Section \ref{proof:sec} general stochastic gradient descent algorithms on Riemannian manifolds are introduced. The algorithms already used in  \cite{meyer-11,meyer-icml,amari-98,arnaudon-10,oja-92} can all be cast in the proposed general framework. The main algorithms are completely intrinsic, i.e. they do not depend on a specific embedding of the manifold or on a choice of local coordinates.

In section \ref{conv:sec} the convergence properties of the algorithms are analyzed. In the Euclidian case,  almost sure (a.s.) convergence of the parameter to a critical point of the gradient of the cost function is well-established  under reasonable assumptions (see e.g. \cite{bottou-98}), but this result has never been proven to hold for non-Euclidian spaces. In this paper, almost sure convergence of the proposed algorithms is obtained under several assumptions, extending the results of the Euclidian case to the Riemannian case.

In Section \ref{examples:sec} the algorithms and the convergence results of the preceding sections are applied to four examples. The first example revisits the celebrated Oja  algorithm \cite{oja-92} for online principal component analysis (PCA). This algorithm can be cast in our versatile framework, and its convergence properties immediately follow from the theorems of Section \ref{conv:sec}. Moreover, the other results of the present paper allow to define alternative algorithms for online PCA with guaranteed convergence properties. The second example is concerned with the randomized computation of intrinsic means on a hyperbolic space, the $\Pp$. This is a rather tutorial example, meant to illustrate the assumptions and results of the third theorem of Section \ref{conv:sec}. The convergence follows from this theorem.   The last two examples are more detailed and include numerical experiments. The third example is concerned with a particular algorithm of  \cite{meyer-11}. The goal is to identify a positive semi-definite matrix (a kernel or a Mahalanobis distance) from noisy measurements. The theoretical convergence results of Section \ref{conv:sec}  allow to complete the work of  \cite{meyer-11}, and simulations illustrate the convergence properties. The last example is concerned with a consensus application on the set of covariance matrices (see e.g. \cite{ljung2}).  A novel randomized gossip algorithm based on the Fisher Information Metric is proposed. The algorithm has a meaningful statistical interpretation, and admits several invariance  and guaranteed convergence properties that follow from the results of Section \ref{conv:sec}. As the state space is convex, the usual gossip algorithm \cite{boyd-gossip} is well defined and can be implemented on this space. Simulations indicate the proposed Riemannian consensus algorithm converges faster than the usual gossip algorithm.

Appendix A briefly presents some links with information geometry and Amari's natural gradient. Appendix B contains a brief recap of differential geometry.  Preliminary results can be found in  \cite{bonnabel-et-al:mtns,gretsi}.

\section{Stochastic gradient on Riemannian manifolds}\label{proof:sec}

\subsection{Standard stochastic gradient in $\RR^n$}
Let $C(w)=\mathbb E_z Q(z,w)=\int Q(z,w) dP(z)$ be a three times continuously
differentiable cost function, where $w\in\RR^n$ is a minimization parameter, and  $dP$ is a probability measure on a measurable space $\mathcal Z$. Consider the optimization problem
\begin{align}\label{pb1:eq}
&\min_w C(w)\end{align}
  In stochastic approximation, the cost function cannot be computed explicitly as the distribution $dP$ is assumed to be unknown. Instead, one has access to a sequence of independent observations $z_1,z_2\cdots$ of a random variable  drawn with probability law $dP$.  At each time step $t$, the user can compute the so-called  loss function $Q(z_t,w)$ for any parameter $w\in\RR^n$. The loss can be viewed as an approximation of the (average) cost function $C(w)$ evaluated under the input $z_t\in\mathcal Z$. Stochastic gradient descent is a standard technique to treat this problem. At each step the algorithm receives an input $z_t$ drawn according to $dP$, and performs a gradient descent on the approximated cost  $
w_{t+1}=w_t-\gamma_t H(z_t,w_t)
$
where   $H(z,w)$ can be viewed as the gradient of the loss, i.e.,   on average $\mathbb E_z H(z,w)=\int H(z,w) dP(z)=\nabla C(w)$. As   $C$ is not convex in many applications, one can not hope for a much better result than almost sure (a.s.) convergence of    $C(w_t)$ to some value $C_\infty$, and convergence of  $\nabla C(w_t)$ to $0$.  Such a result holds under a set of standard assumptions, summarized in e.g. \cite{bottou-98}. Note that, a.s. convergence is a very desirable property for instance in online estimation, as  it ensures asymptotic convergence  is always achieved in practice.

\subsection{Limits of the approach: a motivating example}\label{motivating:sec}

A topical problem that has attracted a lot of attention in the
machine learning community over the last years is low-rank matrix
estimation (or matrix completion, which can be viewed as the matrix counterpart of sparse approximation problems) and in particular the collaborative filtering problem:
given a matrix $W_{ij}^*$ containing the preference ratings of users
about items (movies, books), the goal is to compute personalized
recommendations of these items. Only a small subset of entries
$(i,j)\in \Xi$ is known, and there are many ways to complete the
matrix. A standard approach to overcome this ambiguity, and
to filter the noise, is to constrain the state space by assuming the
tastes of the users are explained by a reduced number of criteria
(say, $r$). This yields the following non-linear optimization
problem
$$
\min_{W\in\RR^{d_1\times d_2}} \quad \sum_{(i,j)\in\Xi}(W_{ij}^*-W_{ij})^2\quad s.t. \quad \text{rank}(W)=r
$$
The matrix being potentially of high dimension ($d_1\simeq 10^5,d_2\simeq 10^6$ in the so-called Netflix prize problem), a standard method to reduce the computational burden is to draw random elements of  $\Xi$, and  perform gradient descent ignoring the remaining entries. Unfortunately the updated matrix
 $
W-\gamma_t\nabla_W (W_{ij}^*-W_{ij})^2
$
does not have rank $r$. Seeking the matrix of  rank $r$ which best approximates it   can be numerically costly, especially for very large $d_1,d_2$. A more natural way to enforce the rank constraint is to endow the parameter space with a Riemannian metric, and to perform a gradient step within the manifold of fixed-rank matrices.  In \cite{meyer-icml} this approach has led to  stochastic gradient algorithms that compete with state of the art methods. Yet a convergence proof is still lacking. The convergence results below are general, and in Section \ref{PSD:sec} they will be shown to apply  to this problem for the particular case of $W^*$  being symmetric positive definite.

\subsection{Proposed general stochastic gradient algorithm on Riemannian manifolds}

In this paper we propose a new procedure to address problem \eqref{pb1:eq} where $C(w)=\mathbb E_z Q(z,w)$ is a three times continuously
differentiable cost function and where $w$ is now a minimization parameter belonging to a smooth  connected  Riemannian manifold  $\mathcal M$. On  $\mathcal M$, we propose to replace the usual update with the following  update
\begin{align}\label{manifold:grad:eq}
w_{t+1}=\exp_{w_t}(-{\gamma_t} H(z_t,w_t))
\end{align}
where $\exp_w$ is the exponential map at $w$, and $H(z,w)$  can be viewed as the Riemannian gradient of the loss,  i.e., we have on average $\mathbb E_z H(z,w)=\int H(z,w) dP(z)=\nabla C(w)$ where $\nabla C(w)$ denotes the Riemannian gradient of $C$ at $w\in\mathcal M$. The proposed update \eqref{manifold:grad:eq} is a straightforward transposition of the standard gradient update in the Euclidian case. Indeed, $H(z,w)$ is a tangent vector to the manifold that describes the direction of steepest descent for the loss. In update  \eqref{manifold:grad:eq}, the parameter moves along the geodesic emanating from the current parameter position $w_t$, in the direction defined by $H(z_t,w_t)$ and with intensity $\norm{H(z_t,w_t}$. If the manifold at hand is $\RR^n$ equipped with the usual Euclidian scalar product, the geodesics are straight lines, and the definitions coincide. Note that, the procedure here is  totally intrinsic, i.e. the algorithm is completely independent of the choice of local coordinates on the manifold.

In many cases, the exponential map is not easy to compute (a calculus of variations problem must be solved, or the Christoffel symbols need be known), and it is much easier and much faster to use a first-order approximation of the exponential, called a retraction. Indeed a retraction $
R_w(v):T_w {\mathcal M}\mapsto \mathcal M
$ maps the tangent space at $w$ to the manifold, and it is  such that $d(R_w(tv),\exp_w(tv))=O(t^2)$. It yields the alternative update
\begin{align}\label{manifold:retraction:eq}
w_{t+1}=R_{w_t}(-{\gamma_t} H(z_t,w_t))
\end{align}
Let us give a simple example to illustrate the ideas: if the manifold were the sphere $\mathbb S^{n-1}$ endowed with the natural metric inherited through immersion in $\RR^n$, a retraction would consist of a simple addition in the ambient space $\RR^n$ followed by a projection onto the sphere. This is a numerically very simple operation that avoids calculating the geodesic distance explicitly. See the Appendix for more details on Riemannian manifolds.

\section{Convergence results}\label{conv:sec}

In this section, the convergence of the proposed algorithms \eqref{manifold:grad:eq} and \eqref{manifold:retraction:eq} are analyzed. The parameter is proved to converge almost surely to a critical point of the cost function in various cases and under various conditions. More specifically,  three general results are derived.

In Subsection \ref{compact:sec}) a first general result is derived: when the parameter $w\in\mathcal M$ is proved to remain in a compact set, the algorithm \eqref{manifold:grad:eq}  converges a.s. under standard conditions on the step size sequence. This theorem applies in particular to all connected compact manifolds. Important examples of such manifolds in applications are the orthogonal group, the group of rotations, the sphere, the real projective space, the Grassmann and the Stiefel manifold.  In Subsection \ref{retraction:sec}),  the result is proved to hold when a twice continuously differentiable retraction is used instead of the exponential map.

Finally, in Subsection \ref{hadamard:sec}), we consider a slightly modified version of algorithm \eqref{manifold:grad:eq} on specific non positively curved Riemannian manifolds.    The  step size $\gamma_t$ is  adapted at each step in order to take into account the effects of negative curvature that tend to destabilize the algorithm. Under a set of mild assumptions naturally extending those of the Euclidian case, the parameter is proved to a.s. remain in a compact set, and thus a.s. convergence is proved. Important examples of such manifolds are the  Poincar\'e disk or the  Poincar\'e half plane, and the space of real symmetric positive definite matrices $P_+(n)$. The sequence of step sizes $(\gamma_t)_{t\geq 0}$ will satisfy the usual condition in stochastic approximation:
 \begin{align}\label{step:eq}
 \sum\gamma_t^2<\infty\quad\text{and}\text\quad\sum\gamma_t=+\infty
 \end{align}

\subsection{Convergence on compact sets}\label{compact:sec}

The following theorem proves the a.s. convergence of the algorithm under some assumptions when the trajectories have been proved to remain in a predefined compact set at all times.  This is of course the case if  $\mathcal M$ is compact.

\begin{thm}\label{thm1}Consider the algorithm \eqref{manifold:grad:eq} on a connected Riemannian manifold $\mathcal M$ with injectivity radius uniformly bounded from below by $I>0$. Assume the sequence of step sizes $(\gamma_t)_{t\geq 0}$ satisfy the standard condition \eqref{step:eq}. Suppose there exists a compact set $K$ such that $w_t\in K$ for all $t\geq 0$. We  also suppose that the gradient is bounded on $K$, i.e. there exists $A>0$ such that for all $w\in K$ and $z\in\mathcal Z$ we have $\norm{H(z,w)}\leq A$. Then  $C(w_t)$ converges a.s. and $\nabla C(w_t)\to 0$ a.s.
\end{thm}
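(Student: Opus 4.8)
The overall strategy is to mimic the classical Euclidean argument (as in Bottou \cite{bottou-98}): find an almost-supermartingale built from $C(w_t)$, apply the Robbins–Siegmund convergence theorem to obtain a.s.\ convergence of $C(w_t)$ and a.s.\ summability of a series controlling the gradient, and then deduce $\nabla C(w_t)\to 0$. The Riemannian ingredients are needed to produce the analogue of the Euclidean Taylor estimate $C(w_{t+1}) \le C(w_t) - \gamma_t \langle \nabla C(w_t), H(z_t,w_t)\rangle + \tfrac12 \gamma_t^2 K \norm{H(z_t,w_t)}^2$, and this is where the injectivity radius bound, compactness, and the $C^3$ hypothesis on $C$ enter.

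\emph{Step 1: a uniform second-order Taylor estimate along geodesics.} Fix $w\in K$ and $v\in T_w\mathcal M$ with $\norm{v}\le A$. Consider $f(s) = C(\exp_w(sv))$ for $s\in[0,\gamma_t]$. Since $\gamma_t\to 0$, for $t$ large enough $\gamma_t A < I$, so $\exp_w(sv)$ is a well-defined geodesic segment staying within the injectivity radius; moreover, enlarging $K$ slightly to a compact $K'$ (using that $\exp$ is continuous and $\gamma_t A$ is eventually small), the whole segment lies in $K'$. Then $f'(0) = \langle \nabla C(w), v\rangle$ and $f''(s) = \mathrm{Hess}\,C(\dot\gamma,\dot\gamma)$ evaluated along the geodesic. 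Because $C$ is $C^3$ (in particular $C^2$) and $K'$ is compact, $|f''(s)|$ is bounded by $K_0 \norm{v}^2$ for a constant $K_0$ depending only on $K'$ — here one must check that the Hessian of $C$ is uniformly bounded on the compact set, which is where one uses that a finite atlas covers $K'$ and the Christoffel symbols and second derivatives of $C$ are bounded on each chart. Taylor's theorem with remainder then gives, for all large $t$,
\begin{align*}
C(w_{t+1}) \le C(w_t) - \gamma_t \langle \nabla C(w_t), H(z_t,w_t)\rangle + \tfrac12 \gamma_t^2 K_0 A^2 .
\end{align*}

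\emph{Step 2: conditional expectation and the almost-supermartingale.} Let $\mathcal F_t$ be the $\sigma$-algebra generated by $z_1,\dots,z_{t-1}$, so that $w_t$ is $\mathcal F_t$-measurable and $z_t$ is independent of $\mathcal F_t$. Taking $\mathbb E[\cdot\mid\mathcal F_t]$ and using $\mathbb E_z H(z,w_t) = \nabla C(w_t)$,
\begin{align*}
\mathbb E\bigl[C(w_{t+1})\mid\mathcal F_t\bigr] \le C(w_t) - \gamma_t \norm{\nabla C(w_t)}^2 + \tfrac12 \gamma_t^2 K_0 A^2 .
\end{align*}
Since $C$ is continuous on the compact $K$ it is bounded below there, so $C(w_t) - \inf_K C \ge 0$; with $\sum\gamma_t^2 < \infty$, the Robbins–Siegmund theorem applies and yields: $C(w_t)$ converges a.s.\ to some finite random variable $C_\infty$, and $\sum_t \gamma_t \norm{\nabla C(w_t)}^2 < \infty$ a.s.

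\emph{Step 3: from summability to $\nabla C(w_t)\to 0$.} The series $\sum\gamma_t\norm{\nabla C(w_t)}^2<\infty$ together with $\sum\gamma_t=\infty$ forces $\liminf \norm{\nabla C(w_t)} = 0$ a.s. To upgrade this to a genuine limit, I would show $\norm{\nabla C(w_t)}$ cannot oscillate: estimate $\bigl|\,\norm{\nabla C(w_{t+1})}^2 - \norm{\nabla C(w_t)}^2\,\bigr|$ using that $d(w_{t+1},w_t)\le \gamma_t A$ and that $\nabla C$ is Lipschitz on $K'$ (again from the $C^2$/compactness bound), controlling the increments by $O(\gamma_t)$; a standard argument (e.g.\ the Bertsekas–Tsitsiklis style lemma, or the classical real-analysis lemma: if $a_t\ge 0$, $\sum\gamma_t a_t<\infty$, $\sum\gamma_t=\infty$, and $|a_{t+1}-a_t|\le M\gamma_t$, then $a_t\to 0$) then gives $\norm{\nabla C(w_t)}^2\to 0$ a.s., hence $\nabla C(w_t)\to 0$ a.s.

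\emph{Main obstacle.} The analytic core is Step 1: producing a single constant $K_0$, valid uniformly over all of $K$ (and the slightly enlarged $K'$ reachable in one step) and over all admissible $z$, that bounds the second-order remainder of $C$ along \emph{geodesics}. This requires handling the interaction between the intrinsic curvature of $\mathcal M$ (via the geodesic equation / Christoffel symbols) and the derivatives of $C$; compactness plus the injectivity-radius lower bound are exactly what make this possible, since they let one cover the relevant region by finitely many normal coordinate charts on which everything in sight is bounded. The rest is a faithful transcription of the Euclidean proof.
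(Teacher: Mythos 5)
Your proposal is correct, and its geometric core (the uniform second-order Taylor estimate along the geodesic from $w_t$ to $w_{t+1}$, valid once $\gamma_t A<I$ and on a slightly enlarged compact set, followed by conditioning on $\mathcal F_t$) is exactly the paper's. Where you diverge is in the probabilistic machinery. The paper first observes that $C(w_t)+\sum_{k\ge t}\gamma_k^2A^2k_1$ is a nonnegative supermartingale, then invokes Fisk's quasi-martingale criterion (Proposition \ref{fisk:prop}) to get $\sum_t\gamma_t\norm{\nabla C(w_t)}^2<\infty$ a.s.; you obtain both conclusions in one stroke from Robbins--Siegmund, which is essentially the same tool packaged differently. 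The genuinely different step is the last one: the paper treats $p_t=\norm{\nabla C(w_t)}^2$ as a stochastic process, performs a second-order Taylor expansion of $\norm{\nabla C}^2$ along the geodesic (thus using bounds on its second derivative, i.e.\ the $C^3$ hypothesis on $C$), and runs the quasi-martingale argument a second time to show $p_t$ converges, hence to $0$. You instead argue path-wise on the almost-sure event $\sum_t\gamma_t p_t<\infty$: since $d(w_t,w_{t+1})\le\gamma_t A$ and $\norm{\nabla C}^2$ is Lipschitz on the enlarged compact set, $|p_{t+1}-p_t|\le M\gamma_t$ deterministically, and the elementary lemma ($a_t\ge0$, $\sum\gamma_t a_t<\infty$, $\sum\gamma_t=\infty$, $|a_{t+1}-a_t|\le M\gamma_t$ imply $a_t\to0$) finishes. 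This deterministic increment-control argument is valid, slightly more elementary, and only needs $\nabla C$ Lipschitz on compacts (so $C^2$ regularity suffices for this step), whereas the paper's route stays entirely within the quasi-martingale framework that it then reuses in Theorems \ref{thm3} and \ref{thm2}. Minor points to make explicit if you write this up fully: the enlarged set $K'$ containing all geodesic segments can be taken as $\exp(\{(w,v):w\in K,\ \norm v\le \delta\})$ with $\delta<I$, which is compact, and the identity $\mathbb E[\langle H(z_t,w_t),\nabla C(w_t)\rangle\mid\mathcal F_t]=\norm{\nabla C(w_t)}^2$ uses the independence of $z_t$ from $\mathcal F_t$ exactly as in the paper.
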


\begin{proof}
The proof builds upon the usual proof in the Euclidian case (see  e.g. \cite{bottou-98}). As the parameter is proved to remain in a compact set, all continuous functions of the parameter can be bounded. Moreover, as $\gamma_t\to 0$, there exists $t_0$ such that for $t\geq t_0$ we have $\gamma_t A<I$. Suppose now that $t\geq t_0$, then there exists a geodesic $\exp(-s\gamma_t{H(z_t,w_t)})_{0\leq s\leq 1}$ linking $w_t$ and $w_{t+1}$ as $d(w_t,w_{t+1})<I$. $C(\exp(-\gamma_t{H(z_t,w_t)}))=C(w_{t+1})$ and thus the Taylor formula implies that (see Appendix)
 \begin{equation}\begin{aligned}\label{taylor2:eq}
C(w_{t+1})-C(w_t)&\leq -\gamma_t\langle H(z_t,w_t),\nabla C(w_t)\rangle
 \\& \quad+ \gamma_t^2\norm{H(z_t,w_t)}^2k_1
\end{aligned}\end{equation}where $k_1$ is an upper bound on the Riemannian Hessian of $C$ in the compact set $K$.
Let  $\mathcal F_t$ be the increasing sequence of  $\sigma$-algebras generated by the  variables available just before time $t$:
 $$
\mathcal F_t=\{z_0,\cdots,z_{t-1}\}
$$$w_t$ being computed from $z_0,\cdots,z_{t-1}$,  is measurable $\mathcal F_t$.   As $z_t$ is independent from $\mathcal F_t$  we have
$ \mathbb E[\langle H(z_t,w_t),\nabla C(w_t)\rangle|\mathcal F_t] =\mathbb E_z[\langle H(z,w_t),\nabla C(w_t)\rangle] = \norm{\nabla C(w_t)}^2$.
 Thus \begin{align}\label{inee:eq}
\mathbb E(C(w_{t+1})-C(w_t)|\mathcal F_t)\leq -{\gamma_t} \norm{\nabla C(w_t)}^2+{\gamma_t}^2A^2k_1
\end{align}as $\norm{H(z_t,w_t)}\leq A$. As $C(w_t)\geq 0$, this proves $C(w_{t})+\sum_t^\infty\gamma_k^2A^2k_1$ is a nonnegative supermartingale, hence it converges a.s. implying that $C(w_t)$ converges a.s. Moreover  summing the inequalities we have
\begin{equation}\begin{aligned}
\sum_{t\geq t_0}{\gamma_t} \norm{\nabla C(w_t)}^2&\leq -\sum_{t\geq t_0}\mathbb E(C(w_{t+1})-C(w_t)|\mathcal F_t)\\&\quad+\sum_{t\geq t_0} {\gamma_t}^2A^2k_1
\end{aligned}\label{rev:ineq}\end{equation}
Here we would like to prove the right term is bounded so that the left term converges. But the fact that $C(w_t)$ converges does not imply it has bounded variations. However, as in the Euclidian case, we can use a theorem by D.L. Fisk \cite{fisk} ensuring that $C(w_t)$ is a quasi martingale, i.e., it can be decomposed into a sum of a martingale and a process whose trajectories are of bounded variation. For a random variable $X$, let $X^+$ denote the quantity $\max (X,0)$.\begin{prop}\label{fisk:prop}\text{[Fisk (1965)]}
Let $(X_n)_{n\in\NN}$ be a non-negative stochastic process with bounded positive variations, i.e.,  such that $\sum_{0}^\infty
\mathbb E([\mathbb E(X_{n+1}-X_n)|\mathcal F_n)]^+)<\infty$. Then the process is a quasi-martingale, i.e.
$$
\sum_{0}^\infty|\mathbb E[X_{n+1}-X_n|\mathcal F_n]|<\infty \quad\text{a.s. , and $X_n$ converges a.s.}
$$\end{prop}
Summing \eqref{inee:eq} over $t$, it is clear that $C(w_t)$ satisfies the proposition's assumptions, and thus $C(w_t)$ is a quasi-martingale, implying $\sum_{t\geq t_0}{\gamma_t} \norm{\nabla C(w_t)}^2$ converges a.s. because of inequality \eqref{rev:ineq} where the central term can be bounded by its absolute value which is convergent thanks to the proposition.
But, as $\gamma_t\to 0$, this does not prove $\norm{\nabla C(w_t)}$ converges a.s. However,  if $\norm{\nabla C(w_t)}$ is proved to converge a.s., it can only converge to 0 a.s. because of condition \eqref{step:eq}.

Now consider the nonnegative process
 $
p_t=\norm{\nabla C(w_t)}^2
$. Bounding the second derivative of $\norm{\nabla C}^2$ by $k_2$, along the geodesic linking $w_t$ and $w_{t+1}$, a Taylor expansion yields
$
p_{t+1}-p_t\leq -2\gamma_t \langle\nabla C (w_t),(\nabla_{w_t}^2 C)H(z_t,w_t)\rangle+({\gamma_t})^2 \norm{H(z_t,w_t)}^2k_2
$, and thus  bounding from below the Hessian of $C$ on the compact set by $-k_3$ we have $
\mathbb E(p_{t+1}-p_t|\mathcal F_t)\leq 2\gamma_t \norm{\nabla C(w_t)}^2 k_3+{\gamma_t}^2 A^2k_2
$. We just proved the sum of the right term is finite.  It implies $p_t$ is a quasi-martingale,  thus it implies a.s. convergence of $p_t$ towards a value $p_\infty$ which can only be 0.

\end{proof}

  \subsection{Convergence with a retraction}\label{retraction:sec}

  In this section, we prove Theorem \ref{thm1} still holds when a retraction is used instead of the exponential map.

\begin{thm}\label{thm3}
 Let $\mathcal M$ be  a connected Riemannian manifold
 with injectivity radius uniformly bounded from below
 by $I>0$. Let $R_w$ be a twice continuously differentiable
 retraction, and consider the update \eqref{manifold:retraction:eq}.
 Assume the sequence of step sizes $(\gamma_t)_{t\geq 0}$ satisfy the
  standard condition \eqref{step:eq}. Suppose there exists a compact set
  $K$ such that $w_t\in K$ for all $t\geq 0$. We suppose also that the
  gradient is bounded in $K$, i.e. for $w\in K$ we have
   $\forall z~\norm{H(z,w)}\leq A$ for some $A>0$. Then
    $C(w_t)$ converges a.s. and $\nabla C(w_t)\to 0$ a.s.\end{thm}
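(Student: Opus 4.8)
The plan is to follow the proof of Theorem \ref{thm1} almost verbatim; the only genuine modification is that the two Taylor expansions used there — the one bounding $C(w_{t+1})-C(w_t)$ and the one bounding $p_{t+1}-p_t$ with $p_t=\norm{\nabla C(w_t)}^2$ — must now be carried out along the retraction curve $c(s)=R_{w_t}(-s\gamma_t H(z_t,w_t))$, $s\in[0,1]$, rather than along a minimizing geodesic joining $w_t$ and $w_{t+1}$.

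The first observation I would record is that the defining property $d(R_w(tv),\exp_w(tv))=O(t^2)$ forces $DR_w(0)=\mathrm{id}_{T_w\mathcal M}$: the curves $t\mapsto R_w(tv)$ and $t\mapsto\exp_w(tv)$ agree to first order in $t$, and the latter has velocity $v$ at $t=0$. Hence, setting $v_t=-\gamma_t H(z_t,w_t)$, the curve $c$ satisfies $c(0)=w_t$, $c(1)=w_{t+1}$ and $\dot c(0)=v_t$, so the first-order term of $s\mapsto C(c(s))$ at $s=0$ is $\langle\nabla C(w_t),v_t\rangle=-\gamma_t\langle H(z_t,w_t),\nabla C(w_t)\rangle$ — exactly as in the geodesic case.

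Next I would control the second-order remainder uniformly. Since $R$ is twice continuously differentiable on a neighborhood of the zero section and $K$ is compact, there exist $\varepsilon>0$ and a compact set $K'\supseteq K$ such that $R_w(u)\in K'$ whenever $w\in K$ and $\norm u\le\varepsilon$; because $\gamma_t\to0$ we have $\gamma_t A\le\varepsilon$ for $t$ large enough, so $c([0,1])\subset K'$ for such $t$. Covering $K'$ by finitely many charts and using that $C$ is three times continuously differentiable while $R$ is twice continuously differentiable, the chain rule expresses $\frac{d^2}{ds^2}C(c(s))$ as a continuous function on $K'$ — involving the Hessian of $C$, the differential $DR$ and the second differential $D^2R$ — times a factor quadratic in $v_t$, so $\abs{\frac{d^2}{ds^2}C(c(s))}\le k_1\gamma_t^2\norm{H(z_t,w_t)}^2$ on $[0,1]$ with $k_1$ depending only on $K'$, $C$ and $R$. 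Taylor's formula with integral remainder then gives precisely inequality \eqref{taylor2:eq}, after which the first half of the proof of Theorem \ref{thm1} applies word for word: conditioning on $\mathcal F_t$ and using $\mathbb E_z H(z,w)=\nabla C(w)$ yields \eqref{inee:eq}, so $C(w_t)$ plus a summable correction is a nonnegative supermartingale and converges a.s.; Proposition \ref{fisk:prop} makes $C(w_t)$ a quasi-martingale, and \eqref{rev:ineq} then forces $\sum_t\gamma_t\norm{\nabla C(w_t)}^2<\infty$ a.s.

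Finally, the same two ingredients applied to $g(s)=\norm{\nabla C(c(s))}^2$ — a $C^2$ function on $K'$ since $C$ is $C^3$ — give, again using $\dot c(0)=v_t$, the first-order term $g'(0)=-2\gamma_t\langle\nabla C(w_t),(\nabla_{w_t}^2 C)H(z_t,w_t)\rangle$ together with the bound $\abs{g''(s)}\le k_2\gamma_t^2\norm{H(z_t,w_t)}^2$ on $[0,1]$; bounding the Hessian of $C$ from below by $-k_3$ on $K'$ and conditioning on $\mathcal F_t$ gives $\mathbb E(p_{t+1}-p_t|\mathcal F_t)\le 2\gamma_t k_3\norm{\nabla C(w_t)}^2+\gamma_t^2A^2k_2$, whose right-hand side is a.s. summable by the previous step and \eqref{step:eq}. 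Hence $p_t$ is a quasi-martingale, converges a.s. to some $p_\infty$, and since $\sum\gamma_t=+\infty$ while $\sum_t\gamma_t\norm{\nabla C(w_t)}^2<\infty$ we must have $p_\infty=0$, i.e. $\nabla C(w_t)\to0$ a.s. Note that the injectivity-radius hypothesis plays no real role here, since geodesics joining consecutive iterates are no longer used; its substitute is the ``retraction radius'' $\varepsilon$ above. The only step I expect to require real care is precisely this uniform second-derivative estimate — checking that the retraction curves eventually stay in one fixed compact set and that the chain-rule expression for $\frac{d^2}{ds^2}C(R_{w_t}(sv_t))$ is bounded there with the explicit $\gamma_t^2\norm{H}^2$ prefactor; everything else is a faithful transcription of the proof of Theorem \ref{thm1}.
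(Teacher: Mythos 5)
Your argument is correct, but it is organized differently from the paper's. The paper never Taylor-expands along the retraction curve: it introduces the auxiliary exponential iterate $w_{t+1}^{exp}=\exp_{w_t}(-\gamma_t H(z_t,w_t))$, uses the defining property of the retraction (made uniform on $K$ by the $C^2$ hypothesis) to get $d(w_{t+1},w_{t+1}^{exp})\leq r\gamma_t^2A^2$ for large $t$, and then splits $C(w_{t+1})-C(w_t)\leq |C(w_{t+1})-C(w_{t+1}^{exp})|+C(w_{t+1}^{exp})-C(w_t)$, bounding the first term by a Lipschitz constant of $C$ times $r\gamma_t^2A^2$ and the second by the geodesic inequality \eqref{taylor2:eq} already established in Theorem \ref{thm1}; the same decomposition is applied to $p(w)=\norm{\nabla C(w)}^2$. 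You instead expand $s\mapsto C(R_{w_t}(sv_t))$ and $s\mapsto p(R_{w_t}(sv_t))$ directly, using $DR_w(0)=\mathrm{id}$ to recover the exact first-order term and a joint compactness argument on $\{(w,u):w\in K,\ \norm{u}\leq\varepsilon\}$ to bound the second derivative by $k\gamma_t^2\norm{H}^2$, after which Theorem \ref{thm1}'s martingale machinery runs unchanged. Both routes produce the same inequalities up to $O(\gamma_t^2)$; the paper's buys economy (it recycles \eqref{taylor2:eq} and only needs the $O(t^2)$ closeness plus Lipschitz bounds on $C$ and $p$ over $K$), but it genuinely uses the injectivity radius to justify the geodesic comparison step, whereas your direct expansion makes the exponential map, and hence the hypothesis $I>0$, superfluous — a point you correctly flag — at the price of invoking the $C^2$ regularity of $R$ jointly in $(w,v)$ through the chain-rule estimate. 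One minor remark: your application of Proposition \ref{fisk:prop} to $p_t$ uses almost-sure summability of the drift bound rather than summability of its expected positive part, but this is exactly the level of detail at which the paper itself argues in Theorem \ref{thm1}, so it is not a gap relative to the original proof.
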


\begin{proof}
 Let $w_{t+1}^{exp}=\exp_{w_t}(-{\gamma_t} H(z_t,w_t))$. The proof essentially relies on the fact that the points $w_{t+1}$ and $w_{t+1}^{exp}$, are close to each other on the manifold for sufficiently large $t$. Indeed,  as the retraction is twice continuously differentiable there exists $r>0$ such that $d(R_w(sv),\exp_w(sv))\leq rs^2$ for $s$ sufficiently small, $\norm v=1$, and $w\in K$. As for $t$ sufficiently large $\gamma_t A$ can be made arbitrarily small (in particular smaller than the injectivity radius), this implies  $d(w_{t+1}^{exp},w_{t+1})\leq {\gamma_t}^2rA^2$.

We can now reiterate the proof of Theorem \ref{thm1}.   We have $C(w_{t+1})-C(w_t)\leq |C(w_{t+1})-C(w_{t+1}^{exp}))|+C(w_{t+1}^{exp})-C(w_t)$.  The term $C(w_{t+1}^{exp})-C(w_t)$ can be bounded as in \eqref{taylor2:eq} whereas we have just proved  $|C(w_{t+1})-C(w_{t+1}^{exp}))|$ is bounded by $ k_1 r{\gamma_t}^2A^2 $  where $k_1$ is a bound on the Riemannian gradient of $C$ in $K$. Thus $C(w_t)$ is a quasi-martingale and $
\sum_1^\infty \gamma_t \norm{\nabla C(w_t)}^2<\infty
$.  It means that if $\norm{\nabla C(w_t)}$ converges, it can only converge to zero.

Let us consider the variations of the function $p(w)=\norm{\nabla C(w)}^2$. Writing $p(w_{t+1})-p(w_t)\leq |p(w_{t+1})-p(w_{t+1}^{exp}))|+p(w_{t+1}^{exp})-p(w_t)$ and bounding the first term of the right term  by $k_3r\gamma_t^2A^2$ where $k_3$ is a bound on the gradient of $p$, we see the inequalities of Theorem \ref{thm1} are unchanged  up to second order terms in $\gamma_t$. Thus $p(w_t)$  is a quasi-martingale and thus converges. \end{proof}

\subsection{Convergence on Hadamard manifolds}\label{hadamard:sec}

In the previous section, we proved convergence as long as the
parameter is known to  remain in a compact set. For some manifolds,
the algorithm can be proved to converge without this assumption.
This is the case for instance in the Euclidian space, where the
trajectories can be proved to be confined to a compact set under a
set of conditions \cite{bottou-98}. In this section, we extend those
conditions to the important class of Hadamard  manifolds, and we
prove convergence. Hadamard manifolds are complete, simply-connected
Riemannian manifolds with nonpositive sectional curvature. In order
to account for curvature effects, the  step size must be slightly
adapted at each iteration. This step adaptation yields a more
flexible algorithm, and allows to relax one of the standard
conditions even in the Euclidian case.

Hadamard manifolds have strong properties. In particular, the exponential map at any point is globally invertible (e.g. \cite{Oneill-book}). Let $D(w_1,w_2)=d^2(w_1,w_2)$ be the squared geodesic distance. Consider the following assumptions, which can be viewed as an extension of the usual ones in the Euclidian case:
\begin{enumerate}
\item There is a point $v\in\mathcal M$ and $S>0$ such that  the opposite of the gradient points towards $v$ when  $d(w,v)$ becomes   larger than $\sqrt S$ i.e. $$\inf_{D(w,v)>S}\langle \exp_{w}^{-1}(v),\nabla C(w)\rangle<0$$
\item There exists a lower bound on the sectional curvature denoted by $\kappa<0$.
\item There exists a continuous function $f:\mathcal M\mapsto \mathbb R$ that satisfies \begin{align*}&f(w)^2~\geq \max\{1, \\&\mathbb E_z\bigl(\norm{H(z,w)}^2(1+\sqrt{\abs{\kappa}}(\sqrt{ D (w,v)}+ \norm{H(z,w)}))\bigr),\\&\mathbb E_z\bigl((2\norm{H(z,w)}\sqrt{ D (w,v)}+\norm{H(z,w)}^2)^2\bigr)\}\end{align*}
\end{enumerate}

\begin{thm}\label{thm2}
Let $\mathcal M$ be a Hadamard manifold. Consider the optimization problem \eqref{pb1:eq}. Under assumptions 1-3, the modified algorithm
\begin{align}\label{manifoldf:grad:eq}
w_{t+1}=\exp_{w_t}(-\frac{\gamma_t}{f(w_t)} H(z_t,w_t))
\end{align} is such that $C(w_t)$ converges a.s. and $\nabla C(w_t)\to 0$ a.s.
\end{thm}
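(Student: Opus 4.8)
The plan is to prove Theorem \ref{thm2} in two stages. The first, and decisive, stage shows that Assumptions 1--3 force the iterates to remain in a compact set almost surely; the second stage then reduces the statement to Theorem \ref{thm1}. Throughout I would monitor the nonnegative process $D_t:=D(w_t,v)=d^2(w_t,v)$, using that on a Hadamard manifold $\exp_v$ is a global diffeomorphism, so that $D(\cdot,v)$ is smooth, its gradient at $w$ equals $-\exp_w^{-1}(v)$, and closed balls are compact (Hopf--Rinow).

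For the confinement stage the heart of the matter is a curvature comparison for the geodesic triangle with vertices $v$, $w_t$, $w_{t+1}=\exp_{w_t}(-\tfrac{\gamma_t}{f(w_t)}H(z_t,w_t))$. Since the sectional curvature is bounded below by $\kappa<0$ (Assumption 2), Toponogov's hinge theorem bounds $d(w_{t+1},v)$ by the corresponding hinge distance in the model space of constant curvature $\kappa$; expanding the $\kappa$-law of cosines in the step length $\ell_t=\tfrac{\gamma_t}{f(w_t)}\norm{H(z_t,w_t)}$ and identifying the hinge angle at $w_t$ through $\langle\exp_{w_t}^{-1}(v),H(z_t,w_t)\rangle$, one obtains an inequality of the form
$$D_{t+1}\leq D_t+\frac{2\gamma_t}{f(w_t)}\langle\exp_{w_t}^{-1}(v),H(z_t,w_t)\rangle+\frac{\gamma_t^2}{f(w_t)^2}\,R(z_t,w_t),$$
where $R(z,w)$ is dominated by an expression of the type $\norm{H(z,w)}^2\bigl(1+\sqrt{\abs\kappa}(\sqrt{D(w,v)}+\norm{H(z,w)})\bigr)$ --- precisely the quantity on the first line of Assumption 3, which encodes how negative curvature amplifies the step. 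Taking the conditional expectation given $\mathcal F_t$, and using $\mathbb E_zH(z,w_t)=\nabla C(w_t)$ together with $f(w_t)^2\geq\mathbb E_zR(\cdot,w_t)$, yields
$$\mathbb E[D_{t+1}\mid\mathcal F_t]\leq D_t+\frac{2\gamma_t}{f(w_t)}\langle\exp_{w_t}^{-1}(v),\nabla C(w_t)\rangle+\gamma_t^2 .$$
Similarly, the triangle inequality $d(w_{t+1},v)\leq d(w_t,v)+\ell_t$ gives $D_{t+1}-D_t\leq\tfrac{\gamma_t}{f(w_t)}(2\norm{H(z_t,w_t)}\sqrt{D_t}+\norm{H(z_t,w_t)}^2)$, hence $\mathbb E[(D_{t+1}-D_t)^2\mid\mathcal F_t]\leq\gamma_t^2$ by the second line of Assumption 3.

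By Assumption 1 the drift term $\langle\exp_{w_t}^{-1}(v),\nabla C(w_t)\rangle$ is negative on $\{D_t>S\}$, while on $\{D_t\leq S\}$ one step can increase $D_t$ only by a bounded amount once $\gamma_t$ is small (using $f\geq1$); combined with $\sum\gamma_t^2<\infty$ and the $L^2$ control of the increments, a Robbins--Siegmund / almost-supermartingale argument --- exactly the mechanism used for the confinement part of the Euclidean proof in \cite{bottou-98} --- shows $D_t$ converges a.s.\ to a finite limit. In particular $\sup_t d(w_t,v)<\infty$ a.s., so $(w_t)$ almost surely stays in a random compact ball. For the second stage, fix $m\in\NN$, set $\tau_m=\inf\{t:D_t>m\}$ and apply the argument of Theorem \ref{thm1} to the stopped sequence $w_{t\wedge\tau_m}$: it lives in the deterministic compact set $\bar B(v,\sqrt m)$, on which $f$ is continuous hence bounded, so the $\mathcal F_t$-measurable effective steps $\tilde\gamma_t=\gamma_t/f(w_t)$ still satisfy \eqref{step:eq} on $\{\tau_m=\infty\}$ and $\norm{H}$ is bounded there; Theorem \ref{thm1}'s Taylor/quasi-martingale reasoning (through Fisk's Proposition \ref{fisk:prop}) then shows $C(w_t)$ converges and $\nabla C(w_t)\to0$ on $\{\tau_m=\infty\}$. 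Since confinement gives $\bigcup_m\{\tau_m=\infty\}$ probability one, the theorem follows.

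The main obstacle is the confinement stage, and within it the curvature comparison: obtaining the inequality for $D_{t+1}-D_t$ with the second-order remainder controlled tightly enough that the two lines of Assumption 3 absorb it, and then the bookkeeping around the fact that Assumption 1 supplies a favourable drift only outside the ball $\{D>S\}$, so that $D_t$ must still be shown to be recurrent in, and eventually trapped near, that ball. The reduction in the second stage is essentially routine once confinement is established.
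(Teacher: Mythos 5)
Your overall architecture is the same as the paper's: a one-step inequality for $D_t=D(w_t,v)$ whose curvature-amplified second-order term is absorbed by Assumption 3 (your Toponogov/law-of-cosines derivation is a legitimate substitute for the paper's Taylor expansion combined with the Hessian comparison of \cite{cordero}, and it lands on exactly the paper's inequality \eqref{borne1:eq}), followed by a reduction to Theorem \ref{thm1} on a compact set; your stopping-time localization with $\tau_m$ and the sandwich $\gamma_t/\sup_B f\leq \gamma_t/f(w_t)\leq\gamma_t$ is essentially the paper's $G_n$ device. The problem is the confinement step as you state it.

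You claim that a Robbins--Siegmund / almost-supermartingale argument applied to $D_t$ shows that $D_t$ converges a.s. That step fails as written. Assumption 1 gives a favourable drift only on $\{D_t>S\}$; on $\{D_t\leq S\}$ the conditional drift of $D_t$ can be positive of size $\frac{2\gamma_t}{f(w_t)}\sqrt{D_t}\,\norm{\nabla C(w_t)}$, i.e.\ of order $\gamma_t$, and $\sum\gamma_t=\infty$, so the summable-positive-variation hypothesis of Robbins--Siegmund (equivalently of Fisk's Proposition \ref{fisk:prop}) is violated for $D_t$ itself. Moreover the conclusion is too strong to be true in general: Theorem \ref{thm2} does not even assert that $w_t$ converges, and inside the ball $w_t$ may wander along a continuum of critical points, so $D_t$ need not converge. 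The missing device—used both in \cite{bottou-98} and in the paper—is to compose with a smooth $\phi$ vanishing on $[0,S]$ with $\phi''\leq 2$ and $0\leq\phi'\leq1$, and to work with $h_t=\phi(D_t)$: the factor $\phi'(D_t)$ kills the non-summable inside-ball drift, your $L^2$ increment bound $\mathbb E[(D_{t+1}-D_t)^2\mid\mathcal F_t]\leq\gamma_t^2$ controls the second-order term of $\phi$, giving $\mathbb E[h_{t+1}-h_t\mid\mathcal F_t]\leq2\gamma_t^2$, so $h_t$ is a quasi-martingale and converges a.s.; since $\phi\to\infty$ this already yields $\sup_t D_t<\infty$ a.s., which is all your second stage needs (the paper additionally uses $\sum\gamma_t/f(w_t)=\infty$ along bounded trajectories and the strict negativity in Assumption 1 to force $h_t\to0$, i.e.\ trapping in the $S$-ball). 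One further small repair in your second stage: on the compact ball Assumption 3 only bounds $\mathbb E_z\norm{H(z,w)}^2$ through $f(w)^2$, not $\norm{H(z,w)}$ uniformly in $z$, so you cannot assert ``$\norm{H}$ is bounded there''; you must rerun Theorem \ref{thm1} with this second-moment bound, which suffices for \eqref{inee:eq} and causes no injectivity-radius issue on a Hadamard manifold, exactly as the paper remarks.
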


Assumptions 1-3 are  mild assumptions that encompass the Euclidian
case. In this latter case Assumption 3 is usually replaced with the
stronger condition  $E_z(\norm{H(z,w)}^k)\leq A+B\norm{w}^k$ for
$k=2,3,4$ (note that, this condition immediately implies the existence
of the function $f$). Indeed, on the one hand our general procedure
based on the adaptive step ${\gamma_t}/{f(w_t)}$ allows to relax
this standard condition, also in the Euclidian case, as will be
illustrated by the example of Section \ref{PSD:sec}. On the other
hand, contrarily to the Euclidian case, one could object that the
user must provide at each step an upper bound on a function of
$D(w,v)$, where $v$ is the point appearing in Assumption 1, which
requires some knowledge of  $v$.  This can appear to be a
limitation, but in fact  finding a point $v$ fulfilling Assumption 1
may be quite obvious in practice, and may be far from requiring
direct knowledge of  the point the algorithm is supposed to converge
to, as  illustrated by the example of Section \ref{poincare:sec}.

\begin{proof} The following proof builds upon the  Euclidian case
\cite{bottou-98}.  We are first going to prove that the trajectories asymptotically remain in a compact set. Theorem \ref{thm1} will then easily apply. A second order Taylor expansion yields
 \begin{equation}\begin{aligned}\label{taylor:eq}
D(w_{t+1},v)-D(w_t,v)\leq &2\frac{\gamma_t}{f(w_t)}\langle H(z_t,w_t),\exp_{w_t}^{-1}(v)\rangle \\\quad &+( \frac{\gamma_t}{f(w_t)})^2\norm{H(z_t,w_t)}^2k_1
\end{aligned}\end{equation}
where $k_1$ is an upper bound on the operator norm of half of the Riemannian hessian of  $D(\cdot,v)$ along the geodesic joining  $w_t$ to $w_{t+1}$ (see the Appendix). If the sectional curvature is  bounded from below by $\kappa<0$ we have (\cite{cordero} Lemma 3.12)
\begin{align*}\lambda_{\text{max}}\bigl(\nabla^2_w(D(w,v)/2)\bigr)\leq\frac{\sqrt{|\kappa|D(w,v)}}{\tanh(\sqrt{|\kappa|D(w,v)})}
\end{align*}
where $\nabla^2_w(D(w,v)/2)$ is the Hessian of the squared half distance and $\lambda_{\text{max}}(\cdot)$ denotes the largest eigenvalue of an operator. This implies  that $\lambda_{\text{max}}\bigl(\nabla^2_w(D(w,v)/2)\bigr)\leq \sqrt{|\kappa|D(w,v)}+1$. Moreover,  along the geodesic linking $w_t$ and $w_{t+1}$,  triangle inequality implies $\sqrt{D(w,v)}\leq \sqrt{D(w_t,v)}+\norm{H(z_t,w_t)}$ as  $f(w_t)\geq 1$ and there exists $t_0$ such that $\gamma_t\leq 1$ for $t\geq t_0$. Thus  $k_1\leq \beta(z_t,w_t)$ for $t\geq t_0$ where $\beta(z_t,w_t)= 1+\sqrt{\abs{\kappa}}(\sqrt{D(w_t,v)}+\norm{H(z_t,w_t)})$.     Let  $\mathcal F_t$ be the increasing sequence of  $\sigma$-algebras generated by the several variables available just before time $t$:
 $
\mathcal F_t=\{z_0,\cdots,z_{t-1}\}
$. As $z_t$ is independent from $\mathcal F_t$, and $w_t$ is $\mathcal F_t$ measurable, we have
$ \mathbb E[(\frac{\gamma_t}{f(w_t)})^2\norm{H(z_t,w_t)}^2k_1|\mathcal F_t] \leq
(\frac{\gamma_t}{f(w_t)})^2\mathbb E_z\bigl(
\norm{H(z,w_t)}^2\beta(z,w_t)\bigr) $.
Conditioning \eqref{taylor:eq} to
$\mathcal F_t$, and using Assumption 3:
\begin{equation}\begin{aligned}\label{borne1:eq}
&\mathbb E[D(w_{t+1},v)-D(w_t,v)|\mathcal F_t]\\\quad &\leq 2\frac{\gamma_t}{f(w_t)}\langle \nabla C(w_t),\exp_{w_t}^{-1}(v)\rangle +\gamma_t^2\end{aligned}\end{equation}
Let $\phi:\mathbb R^+\to\mathbb R^+$ be a smooth function such that
\begin{itemize}
\item $\phi(x)=0$ for $0\leq x\leq S$
\item $0< \phi''(x)\leq 2$ for $S< x\leq S+1$
\item $\phi'(x)=1$ for $x\geq S+1$
\end{itemize}
and let $h_t=\phi(D(w_t,v))$.
Let us prove it converges a.s. to $0$. As $\phi''(x)\leq 2$ for all $x\geq 0$ a second order Taylor expansion on $\phi$ yields
\begin{align*}
h_{t+1}-h_t&\leq [D(w_{t+1},v)-D(w_t,v)]\phi'(D(w_t,v))\\\quad &+(D(w_{t+1},v)-D(w_t,v))^2
\end{align*}
Because of the triangle inequality we have $d(w_{t+1},v)\leq d(w_t,v)+\frac{\gamma_t}{f(w_t)} \norm{H(z_t,w_t)}$. Thus $D(w_{t+1},v)-D(w_t,v)\leq 2d(w_t,v)\frac{\gamma_t}{f(w_t)} \norm{H(z_t,w_t)}+(\frac{\gamma_t}{f(w_t)})^2 \norm{H(z_t,w_t)}^2$ which is less than $ \frac{\gamma_t}{f(w_t)}[2d(w_t,v) \norm{H(z_t,w_t)}+ \norm{H(z_t,w_t)}^2] $ for $t\geq t_0$.  Using Assumption 3 and the fact that $w_t$ is measurable $\mathcal F_t$ we have $\mathbb E[h_{t+1}-h_t|\mathcal F_t]\leq \phi'(D(w_t,v)) \mathbb E[D(w_{t+1},v)-D(w_t,v)|\mathcal F_t]+\gamma_t^2$. Using \eqref{borne1:eq} we have
\begin{equation}\begin{aligned}\label{eeee:eq}
 &\mathbb E[h_{t+1}-h_t|\mathcal F_t]\\\quad &\leq 2\frac{\gamma_t}{f(w_t)}\langle \nabla C(w_t),\exp_{w_t}^{-1}(v)\rangle \phi'(D(w_t,v)) +2\gamma_t^2
\end{aligned}\end{equation}
as $\phi'$ is positive, and less than 1.
Either $D(w_t,v)\leq S$  and then we have  $\phi'(D(w_t,v))=0$ and thus  $\mathbb  E[h_{t+1}- h_t|\mathcal F_t]\leq 2\gamma_t^2$. Or  $D(w_t,v)>S$, and Assumption 1 ensures $\langle \nabla C(w_t),\exp_{w_t}^{-1}(v)\rangle$ is negative. As $\phi'\geq 0$,  \eqref{eeee:eq}  implies $\mathbb E[h_{t+1}- h_t|\mathcal F_t]\leq 2\gamma_t^2$.
In both cases $\mathbb E[h_{t+1}- h_t|\mathcal F_t]\leq 2\gamma_t^2$, proving  $h_{t}+2\sum_t^\infty\gamma_k^2$ is a positive supermartingale, hence it converges a.s. Let us prove it necessarily converges to $0$. We have    $\sum_{t_0}^\infty
\mathbb E([\mathbb E(h_{t+1}-h_t|\mathcal F_t)]^+)\leq 2\sum_t\gamma_t^2<\infty $.  Proposition \ref{fisk:prop} proves that $h_t$ is a quasi-martingale.
 Using \eqref{eeee:eq}  we have inequality $$
-2\sum_{t_0}^\infty\frac{\gamma_t}{f(w_t)}\langle \nabla C(w_t),\exp_{w_t}^{-1}(v)\rangle\phi'(D(w_t,v))\leq 2\sum_{t_0}^\infty\gamma_t^2-\sum_{t_0}^\infty\mathbb E[h_{t+1}-h_t|\mathcal F_t]$$and as $h_t$ is a  quasi-martingale we have a.s.
\begin{equation}
\begin{aligned}\label{borne2:eq}&-\sum_{t_0}^\infty\frac{\gamma_t}{f(w_t)}\langle \nabla C(w_t),\exp_{w_t}^{-1}(v)\rangle\phi'(D(w_t,v))\\&\quad\leq 2|\sum_{t_0}^\infty\gamma_t^2|+\sum_{t_0}^\infty|\mathbb E[h_{t+1}-h_t|\mathcal F_t]|<\infty\end{aligned}
\end{equation}
Consider a sample trajectory for which $h_t$ converges to $\alpha>0$.  It means that for $t$ large enough $D(w_t,v)> S$ and thus $\phi'(D(w_t,v))>\epsilon_1>0$. Because of Assumption 1  we have also $\langle\nabla C(w_t),\exp_{w_t}^{-1}(v)\rangle<-\epsilon_2<0$. This contradicts \eqref{borne2:eq} as $\sum_{t_0}^\infty\frac{\gamma_t}{f(w_t)}=\infty$. The last equality comes from \eqref{step:eq} and the fact that   $f$ is continuous and thus bounded along the trajectory.

It has been proved that almost every trajectory asymptotically enters the ball of center $v$ and radius $S$ and stays inside of it. Let us prove that we can work on a fixed compact set.  Let
  $G_n=\bigcap_{t>n}\{ D(w_t,v) \leq S\}$. We have just proved $P(\cup ~G_n)=1$. Thus to prove a.s. convergence, it is thus sufficient to prove a.s.  convergence on each of those sets.  We assume from now on the trajectories all belong to the ball of center $v$ and radius $S$.  As this is a compact set, all continuous functions of the parameter can be bounded. In particular   $\gamma_t/k_3\leq\gamma_t/f(w_t)\leq\gamma_t$ for some $k_3>0$ and thus the modified step size verifies the conditions of Theorem \ref{thm1}. Moreover, $\mathbb E_z(\norm{H(z,w)}^2)\leq A^2$ for some $A>0$ on the compact as it is dominated by $f(w)^2$. As there is no cut locus, this weaker condition is sufficient, since it implies that \eqref{inee:eq} holds. The proof follows from a mere application of Theorem \ref{thm1} on this compact set.

 \end{proof}

Note that, it would be possible to derive an analogous result when a retraction is used instead of the exponential, using the ideas of the proof of   Theorem \ref{thm3}. However, due to a lack of relevant examples,  this result is not presented.

\section{Examples}\label{examples:sec}

Four application examples  are presented.  The first two examples
are rather tutorial. The first one illustrates Theorems \ref{thm1}
and \ref{thm3}. The second one allows to   provide a graphical
interpretation of Theorem \ref{thm2} and its assumptions.   The
third and fourth examples are more detailed and include numerical
experiments. Throughout this section $\gamma_t$ is a sequence of
positive step sizes satisfying the usual condition \eqref{step:eq}.

\subsection{Subspace tracking}\label{grass:sec}

We propose to first revisit in the light of the preceding results
the well-known subspace tracking algorithm of Oja \cite{oja-92}
which is a generalization of the  power method for computing the
dominant eigenvector. In several applications, one  wants to compute
the $r$ principal eigenvectors, i.e. perform principal component
analysis (PCA) of a $n\times n$ covariance matrix $A$, where $r\leq
n$. Furthermore, for computational reasons or for adaptiveness, the
measurements are supposed to be  a stream of $n$-dimensional data
vectors $z_1,\cdots z_t,\cdots$ where $E(z_tz_t^T)=A$ (online
estimation). The problem boils down to estimating  an element of the
Grassmann manifold {Gr}$(r,n)$ of $r$-dimensional subspaces in a
$n$-dimensional ambient space, which can be identified to the set of
rank $r$ projectors:
$$
\text{Gr}(r,n)=\set{P\in\RR^{n\times n}}{P^T=P,~P^2=P,~\tr{P}=r}.
$$
Those projectors can be represented by  matrices $WW^T$ where $W$ belongs to the Stiefel manifold $\text{St}(r,n)$, i.e., matrices of $\RR^{n\times r}$ whose columns are orthonormal.  Define the cost function
$$
C(W)=-\frac{1}{2} \mathbb E_z [z^TW^T Wz]=-\frac{1}{2} \tr{W^TA W}
$$which is minimal when $W$ is a basis of the dominant subspace of the covariance matrix $A$.
It is invariant to rotations $W\mapsto WO, O\in\mathcal O(r)$. The state-space is therefore the set of equivalence classes
$
[W]=\set{WO}{O\in\mathcal O(r)}
$.
This set is denoted by $St(r,n)/\mathcal O(r)$. It is a \emph{quotient representation} of the Grassmann manifold Gr$(r,d)$. This quotient geometry  has been well-studied in e.g. \cite{edelman-98}.   The Riemannian gradient under the event $z$ is:
 $
H(z,W)=(I-W W^T) zz^TW
$. We have the following result
\begin{prop}
Suppose $z_1,z_2,\cdots$ are uniformly bounded. Consider the stochastic Riemannian gradient algorithm
\begin{align}\label{geodes:oja}
W_{t+1}=W_tV_t\cos{(\gamma_t\Theta_t)}V_t^T+U_t\sin{(\gamma_t\Theta_t)}V_t^T
\end{align}
where $U_t\Theta_tV_t$ is the compact SVD of the matrix $(I-W_t W_t^T) z_tz_t^TW_t$. Then $W_t$ converges a.s. to an invariant subspace of the covariance matrix $A$.
\end{prop}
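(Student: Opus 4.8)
The plan is to recognize the update (\ref{geodes:oja}) as an instance of the generic exponential-map algorithm (\ref{manifold:grad:eq}) on the compact Grassmann manifold, and then to invoke Theorem \ref{thm1}.

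\emph{Geometric set-up.} I would work on $\mathcal M=\mathrm{St}(r,n)/\mathcal O(r)=\mathrm{Gr}(r,n)$ with its standard quotient metric, for which tangent vectors at $[W]$ are represented by horizontal lifts $\Delta$ satisfying $W^{T}\Delta=0$. Since $W_t^{T}H(z_t,W_t)=W_t^{T}(I-W_tW_t^{T})z_tz_t^{T}W_t=0$, the matrix $H(z_t,W_t)$ is a horizontal tangent vector at $[W_t]$, and $\mathbb E_z H(z,W)=(I-WW^{T})AW$ is, up to the overall sign convention in the definition of $H$, the Riemannian gradient of $C$; moreover $C$ is a polynomial in the entries of $W$ and is $\mathcal O(r)$-invariant, hence descends to a $C^{\infty}$ function on $\mathrm{Gr}(r,n)$. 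Using the closed form for Grassmann geodesics (see \cite{edelman-98}), namely $\exp_{[W]}(\Delta)=[\,WV\cos(\Theta)V^{T}+U\sin(\Theta)V^{T}\,]$ when $\Delta$ has compact SVD $\Delta=U\Theta V^{T}$, and applying it to $\Delta=\gamma_t H(z_t,W_t)=U_t(\gamma_t\Theta_t)V_t^{T}$, one sees that (\ref{geodes:oja}) is precisely the update (\ref{manifold:grad:eq}) on $\mathrm{Gr}(r,n)$; the formula is $\mathcal O(r)$-equivariant, so it defines a genuine iteration of $[W_t]$ (equivalently of the projector $W_tW_t^{T}$) on the quotient.

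\emph{Applying Theorem \ref{thm1}.} Since $\mathrm{Gr}(r,n)$ is compact, its injectivity radius is bounded below by some $I>0$ and the iterates stay in the compact set $K=\mathrm{Gr}(r,n)$ for free. If $\norm{z_t}\le M$ uniformly then, since $W_t$ has orthonormal columns, $\norm{H(z,W)}=\norm{(I-WW^{T})zz^{T}W}\le\norm{z}^{2}\le M^{2}$, so the gradient is bounded on $K$; the step sizes satisfy (\ref{step:eq}) by hypothesis. Theorem \ref{thm1} then gives that $C(W_t)$ converges a.s. and $\nabla C(W_t)\to0$ a.s. A short computation shows $\nabla C(W)=0\iff(I-WW^{T})AW=0\iff AW=W(W^{T}AW)$, which holds exactly when $\mathrm{range}(W)$ is an $A$-invariant subspace; hence every accumulation point of $([W_t])$ is an $A$-invariant $r$-plane.

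\emph{From a null gradient to convergence.} To upgrade this to convergence I would use $d([W_t],[W_{t+1}])\le\gamma_t\norm{H(z_t,W_t)}\le\gamma_t M^{2}\to0$, so the limit set of $([W_t])$ is a \emph{connected} subset of the critical set; since $C(W_t)$ converges, this limit set lies in a single connected component of the critical set, and when $A$ has simple spectrum (so that the critical set is the finite set of coordinate $r$-planes of the eigenbasis of $A$) this component reduces to a point, which gives a.s. convergence of $W_t$ to an invariant subspace of $A$. This last point is the only non-mechanical step, and the place I would be most careful: Theorem \ref{thm1} yields $\nabla C(W_t)\to0$ but not a priori convergence of $W_t$, so the possible drift of the iterates along the critical set must be excluded; the connectedness argument settles this when that set is totally disconnected (the generic case), and in the degenerate case "converges to an invariant subspace" should be read as convergence into the set of invariant subspaces. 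The identification of (\ref{geodes:oja}) with a Riemannian exponential step is classical but worth the explicit check because of the $\mathcal O(r)$-quotient and the sign conventions.
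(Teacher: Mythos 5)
Your proposal follows essentially the same route as the paper: recognize \eqref{geodes:oja} as the exponential-map update \eqref{manifold:grad:eq} on the compact Grassmann manifold (injectivity radius $\pi/2$, gradients bounded because the inputs are bounded), apply Theorem \ref{thm1}, and identify the critical points $\nabla C(W)=0$ with $AW=W(W^TAW)$, i.e. with $A$-invariant subspaces. The only difference is your final limit-set/connectedness argument upgrading $\nabla C(W_t)\to 0$ to convergence of $W_t$ itself; the paper's proof simply reads convergence to a critical point off Theorem \ref{thm1} without justifying that step, so your extra care (and the simple-spectrum caveat for when the critical set is not discrete) is a legitimate refinement of the same argument rather than a different approach.
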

\begin{proof}
The proof is a straightforward application of Theorem \ref{thm1}. Indeed, the update \eqref{geodes:oja} corresponds to \eqref{manifold:grad:eq} as it states that $W_{t+1}$ is on the geodesic emanating from $W_t$ with tangent vector $H(z_t,W_t)$ at a distance $\gamma_t\norm{H(z_t,W_t)}$ from $W_t$.  As the input sequence is bounded, so is the sequence of gradients.
The injectivity radius of the Grassmann manifold is $\pi/2$, and is thus bounded away from zero, and the Grassmann manifold is compact. Thus Theorem \ref{thm1} proves that $W_t$ a.s. converges to a point such that $\nabla C(W)=0$, i.e. $AW=W W^T AW$.
For such points there exists $M$ such that $AW=WM$,  proving $W$  is
an invariant subspace of $A$. A local analysis proves the dominant
subspace of $A$ (i.e. the subspace associated with the first $r$ eigenvalues) is the only stable subspace of the averaged algorithm
\cite{oja-92} under basic assumptions.
\end{proof}

We also have the following result
\begin{prop}
Consider a twice differentiable retraction $R_W$. The algorithm \begin{align}\label{oja3:eq}
W_{t+1}=R_{W_t}\bigl(W_t+\gamma_t(I-W_t W_t^T) z_tz_t^TW_t\bigr)
\end{align} converges a.s. to an invariant subspace of the covariance matrix $A$.
\end{prop}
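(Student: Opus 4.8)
The plan is to repeat the argument of the previous proposition almost verbatim, invoking the retraction version Theorem \ref{thm3} in place of Theorem \ref{thm1}. First I would note that \eqref{oja3:eq} is the retraction analogue of the geodesic update \eqref{geodes:oja}, i.e. an instance of \eqref{manifold:retraction:eq} on the Grassmann manifold in its quotient representation $\mathrm{St}(r,n)/\mathcal{O}(r)$: the search direction fed to $R_{W_t}$ is $W_t$ shifted by the horizontal vector $\gamma_t H(z_t,W_t)$ with $H(z,W)=(I-WW^T)zz^TW$ the Riemannian gradient of the loss, exactly the addition-then-retract pattern described for the sphere in Section \ref{proof:sec}, and by hypothesis $R_W$ is a twice (continuously) differentiable retraction.

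Next I would verify the hypotheses of Theorem \ref{thm3}. The Grassmann manifold is compact and connected, so one may take the compact set $K$ to be the whole manifold and the trajectory stays in it automatically; its injectivity radius is $\pi/2>0$, hence uniformly bounded below. Assuming, as in the previous proposition, that $z_1,z_2,\dots$ are uniformly bounded, and using $\norm{I-WW^T}\le 1$ together with the boundedness of $\norm{W}$ on the compact manifold, the gradient obeys $\norm{H(z,W)}\le A$ for some $A>0$; the step sizes satisfy \eqref{step:eq} by assumption. Theorem \ref{thm3} then yields that $C(W_t)$ converges a.s. and $\nabla C(W_t)\to 0$ a.s. It remains to read off the geometry: $\nabla C(W)=0$ means $(I-WW^T)AW=0$, i.e. $AW=WM$ with $M=W^TAW$, so $\mathrm{Range}(W)$ is $A$-invariant; hence $W_t$ converges a.s. to a basis of an invariant subspace of $A$, and, exactly as in \cite{oja-92}, a local analysis of the averaged flow identifies the dominant subspace as the only stable equilibrium.

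I do not expect a serious obstacle, since the proof is essentially an application of an already-proved theorem. The two points deserving care are: checking that the map $V\mapsto R_{W_t}(W_t+V)$ appearing in \eqref{oja3:eq} really is a retraction in the sense of the paper and that it is compatible with the quotient $\mathrm{St}(r,n)/\mathcal{O}(r)$, so that everything is well defined on equivalence classes and the Grassmann metric is the relevant one, which is the only place the quotient structure is used; and remembering that ``convergence to an invariant subspace'' refers to $\mathrm{Range}(W_t)$, equivalently to the projector $W_tW_t^T$, rather than to $W_t$ itself, since both the cost and the update are defined only modulo the $\mathcal{O}(r)$ action.
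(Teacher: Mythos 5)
Your proposal is correct and follows essentially the same route as the paper, which simply notes the result is a ``mere application'' of Theorem \ref{thm3} on the compact Grassmann manifold, with the critical-point characterization $AW=WW^TAW$ carried over from the geodesic case. Your two cautionary remarks (the bounded-inputs assumption inherited from the previous proposition, and reading the update as retracting the tangent vector $\gamma_t H$ so it is well defined on the quotient) are exactly the implicit points the paper glosses over.
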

The result is a mere application of  Theorem \ref{thm3}. Consider in particular the following retraction:  $R_W(\gamma H)$=qf$(W+\gamma H)$ where qf() extracts the orthogonal factor in the QR decomposition of its argument. For small $\gamma_t$, this retraction amounts to follow the gradient in the Euclidian ambient space $\RR^{n\times p}$, and then to orthonormalize the matrix at each step. It is  an infinitely  differentiable retraction \cite{absil-book}. The algorithm  \eqref{oja3:eq} with this particular retraction is known as  Oja's
vector field for subspace tracking and has already been proved to converge in \cite{oja-83}.  Using the general framework proposed in the present paper, we see this convergence result  directly stems from Theorem \ref{thm3}.

This example clearly illustrates the benefits of using a retraction. Indeed, from a numerical viewpoint,  the geodesic update \eqref{geodes:oja}  requires to perform a SVD at each time step, i.e. $O(nr^2)+O(r^3)$ operations,  whereas update  \eqref{oja3:eq} is only an orthonormalization of the vectors having a lower computational cost of order $O(nr^2)$, which can be very advantageous, especially when $r$ is large.

 \subsection{Randomized computation of a Karcher mean on a hyperbolic space}\label{poincare:sec}

We propose to illustrate  Theorem \ref{thm2} and the  assumptions it relies on on a well-known and tutorial manifold. Consider the unit disk $\mathcal D=\{x\in\RR^2 : \norm{x}<1\}$ with the Riemannian metric defined on the tangent plane at $x$ by
$$
\langle\xi,\eta\rangle_x=4\frac{\xi\cdot\eta}{(1-\norm{x}^2)^2}
$$
where $``\cdot"$ represents the conventional scalar product in $\RR^2$. The metric tensor is thus diagonal, so the angles between two intersecting curves in the Riemannian metric are the same as in the Euclidian space. However, the distances differ: as a point is moving closer to the boundary of the disk, the distances are dilated so that the boundary can not be reached in finite time. As illustrated on the figure, the geodesics are either arcs of circles that are orthogonal to the boundary circle, or diameters. The $\Pp$ equipped  with its metric is a Hadamard manifold.

\begin{figure}
\centering
\includegraphics[width=.4\textwidth]{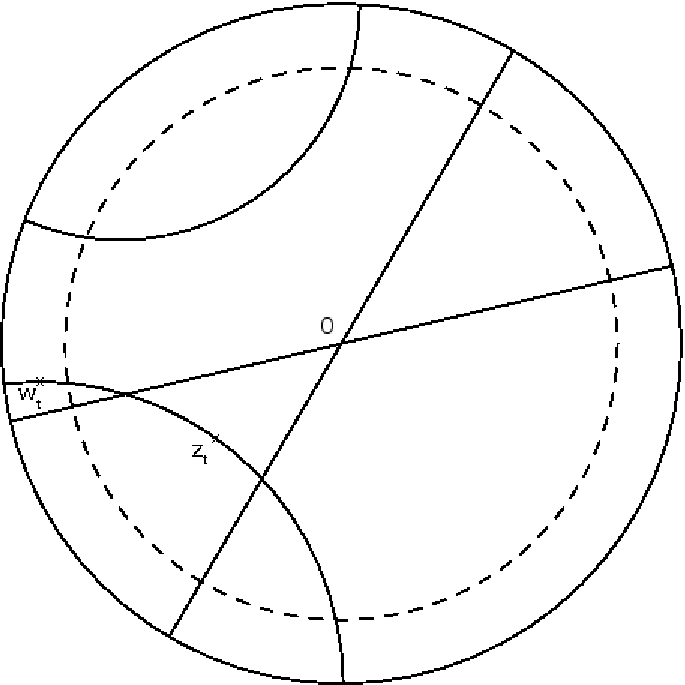}
\caption{The Poincar\'e disk. The boundary is at infinite distance from the center. The geodesics (solid lines) are either  arcs of circles perpendicular to the boundary of the disk, or diameters.  The dashed circle is the boundary of a geodesic ball centered at 0. Assumption 1 is obviously verified: if a point $w_t$ outside the ball makes a small move towards any point  $z_t$ inside the ball along the geodesic linking them, its distance to 0 decreases. } \label{fig1}
\end{figure}

The Karcher (or Fr\'echet) mean on a Riemannian manifold is defined
as the minimizer of $w\mapsto\sum_1^N d^2(w,z_i)$. It can be viewed
as a natural extension of the usual Euclidian barycenter to the
Riemannian case. It is intrinsically defined,   and it is unique on
Hadamard manifolds. There has been growing interest in computing
Karcher means recently,  in particular for filtering on manifolds,
see e.g.  \cite{afsari,barbaresco,arnaudon-10}.  On the $\Pp$   we
propose to compute the mean of $N$ points  in a randomized way. The
method is as follows, and is closely related to the approach \cite{arnaudon-10}. Let $w_t$ be the optimization parameter. The
goal is to find the minimum of the cost function
$$
C(w)=\frac{1}{2N}\sum_1^N d^2(w,z_i)
$$At each time step, a point $z_i$ is randomly picked with an uniform probability law. The loss function is $Q(z_t,w_t)= \frac{1}{2}d^2(w_t,z_i)$, and $H(z_t,w_t)$ is the Riemannian gradient of the half squared geodesic distance $\frac{1}{2}D(z_t,w_t)=\frac{1}{2}d^2(z_t,w_t)$. On the $\Pp$, the distance function is defined by  $d(z,w)=\cosh^{-1}{(1+\delta(z,w))}$ where
$
\delta(z,w)=2\frac{\norm{z-w}^2}{(1-\norm{z}^2)(1-\norm{w}^2)}
$. As the metric tensor is diagonal, the Riemannian gradient and the Euclidian gradient have the same direction. Moreover its norm is simply $d(z_t,w_t)$ (see the Appendix). It is thus easy to prove that
\begin{align}\label{H:eq}
H(z_t,w_t)=\frac{(1-\norm{w_t}^2)(w_t-z_t)+\norm{w_t-z_t}^2 w_t}{\norm{(1-\norm{w_t}^2)(w_t-z_t)+\norm{w_t-z_t}^2 w_t}}d(z_t,w_t)
\end{align}

When there is a lot of redundancy in the data, i.e. when some points are very close to each other,  a randomized algorithm may be much more efficient numerically than a batch algorithm. This becomes obvious in the extreme case where the $z_i$'s are all equal. In this case, the approximated gradient $H(z_t,w_t)$ coincides with the (Riemannian) gradient of the cost $C(w_t)$. However, computing this latter quantity requires $N$ times more operations than computing the approximated gradient. When $N$ is large and when there is a lot of redundancy in the data, we thus see a randomized algorithm can lead to a drastic reduction in the computational burden.  Besides,  note that, the stochastic algorithm can also be used in order to filter  a stream of noisy measurements of a single point on the manifold (and thus  track this point in case it slowly moves). Indeed, it is easily seen that if $\mathcal M=\RR^n$ and $d$ is the Euclidian distance, the proposed update boils down to a first order discrete low pass filter as it computes a weighted mean between the current update $w_t$ and the new measurement $z_t$.
\begin{prop}  Suppose at each time a point $z_t$ is randomly drawn. Let $S>0$ be such that $ S>(\max\{d(z_1,0),\cdots,d(z_N,0)\})^2$  and let $\alpha(w_t)=d(w_t,0)+\sqrt S$. Consider the  algorithm \eqref{manifoldf:grad:eq} where $H(z_t,w_t)$ is given by \eqref{H:eq} and
\begin{align*}f(w_t)^2= \max\{&1, \\&\alpha(w_t)^2(1+d(w_t,0)+ \alpha(w_t)),\\&(2\alpha(w_t)d (w_t,0)+\alpha(w_t)^2)^2\}\end{align*} Then $w_t$  converges  a.s. to the Karcher mean of the points $z_1,\cdots,z_N$.
\end{prop}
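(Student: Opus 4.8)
The strategy is to check that Assumptions 1--3 of Theorem \ref{thm2} hold with the distinguished point $v$ taken to be the center $0$ of the disk and with $\kappa=-1$, and then to upgrade the conclusion $\nabla C(w_t)\to 0$ of that theorem into $w_t\to(\text{Karcher mean})$. Assumption 2 is immediate: the $\Pp$ with the given metric is a Hadamard manifold of constant sectional curvature $-1$, so we take $\kappa=-1$ and $\sqrt{\abs\kappa}=1$.

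For Assumption 1, recall that on a Hadamard manifold the Riemannian gradient of the half squared distance is $\nabla_w\frac12 d^2(w,z_i)=-\exp_w^{-1}(z_i)$, so that $H(z_i,w)=-\exp_w^{-1}(z_i)$, $\nabla C(w)=-\frac1N\sum_{i=1}^N\exp_w^{-1}(z_i)$, and $\langle\exp_w^{-1}(0),\nabla C(w)\rangle=-\frac1N\sum_{i=1}^N\langle\exp_w^{-1}(0),\exp_w^{-1}(z_i)\rangle$. The law of cosines comparison on Hadamard manifolds, applied to the geodesic triangle with vertices $w,0,z_i$, gives $\langle\exp_w^{-1}(0),\exp_w^{-1}(z_i)\rangle\geq\frac12\bigl(d^2(w,0)+d^2(w,z_i)-d^2(0,z_i)\bigr)$, hence, dropping the nonnegative terms $d^2(w,z_i)$,
\[
\langle\exp_w^{-1}(0),\nabla C(w)\rangle\;\leq\;-\frac12\Bigl(d^2(w,0)-\frac1N\sum_{i=1}^N d^2(0,z_i)\Bigr).
\]
Since $S>\bigl(\max_i d(z_i,0)\bigr)^2\geq\frac1N\sum_{i=1}^N d^2(0,z_i)$, the right-hand side is at most $-\frac12\bigl(S-\frac1N\sum_i d^2(0,z_i)\bigr)<0$ whenever $D(w,0)=d^2(w,0)>S$; this is exactly Assumption 1, and it is the quantitative form of the picture recalled in the caption of Figure \ref{fig1}.

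For Assumption 3, the crucial fact is that the norm of $H$ is the distance, $\norm{H(z_t,w_t)}=d(z_t,w_t)$, and by the triangle inequality $d(z_t,w_t)\leq d(w_t,0)+d(0,z_t)\leq d(w_t,0)+\sqrt S=\alpha(w_t)$ for every possible value of the draw $z_t\in\{z_1,\dots,z_N\}$. Since $\sqrt{D(w_t,0)}=d(w_t,0)$, $\sqrt{\abs\kappa}=1$, and the two integrands appearing in Assumption 3 are nondecreasing in $\norm H$, replacing $\norm{H}$ by $\alpha(w_t)$ bounds them by $\alpha(w_t)^2\bigl(1+d(w_t,0)+\alpha(w_t)\bigr)$ and $\bigl(2\alpha(w_t)d(w_t,0)+\alpha(w_t)^2\bigr)^2$ respectively; since $\alpha$ is continuous, the function $f$ of the statement is continuous and satisfies Assumption 3. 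Theorem \ref{thm2} therefore applies and yields that $C(w_t)$ converges a.s. and $\nabla C(w_t)\to 0$ a.s. It then remains to identify the limit: on the Hadamard manifold $\Pp$ the Riemannian Hessian of each $\frac12 d^2(\cdot,z_i)$ is bounded below by the identity, so $C$ is $1$-strongly geodesically convex and hence has a unique critical point $m$, which is its global minimizer, i.e. the Karcher mean of $z_1,\dots,z_N$; strong convexity also gives $\norm{\nabla C(w)}\geq d(w,m)$ for all $w$ (the function $t\mapsto\langle\nabla C(\gamma(t)),\dot\gamma(t)\rangle$ along the minimizing geodesic $\gamma$ from $m$ to $w$ vanishes at $m$ and has derivative $\geq\norm{\dot\gamma}^2$), so $\nabla C(w_t)\to 0$ forces $d(w_t,m)\to 0$, i.e. $w_t\to m$ a.s. The main obstacle is Assumption 1: one must turn the intuitive ``inward-pointing gradient'' picture of Figure \ref{fig1} into a genuine inequality by invoking the Hadamard comparison together with $\nabla_w\frac12 d^2(w,z)=-\exp_w^{-1}(z)$; the remaining verifications and the concluding strong-convexity argument are routine.
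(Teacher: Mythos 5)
Your proof is correct and follows the same overall plan as the paper — check Assumptions 1--3 of Theorem \ref{thm2} with $v=0$ and $\kappa=-1$, with Assumptions 2 and 3 handled exactly as in the paper (constant curvature, and the triangle inequality $\norm{H(z,w)}=d(z,w)\leq\alpha(w)$) — but it differs in two places, both to its advantage. For Assumption 1 the paper works in the coordinates of the disk model: it expands $\langle\exp_w^{-1}(0),H(z_i,w)\rangle_w$ using the explicit formula \eqref{H:eq} and the conformal factor, and shows the resulting Euclidean expression is negative and bounded away from zero when $d(w,0)>\sqrt S$; you instead use the intrinsic CAT(0) law-of-cosines comparison $\langle\exp_w^{-1}(0),\exp_w^{-1}(z_i)\rangle\geq\frac12\bigl(d^2(w,0)+d^2(w,z_i)-d^2(0,z_i)\bigr)$ together with $H(z_i,w)=-\exp_w^{-1}(z_i)$, which yields a clean quantitative bound $\leq-\frac12\bigl(S-\frac1N\sum_i d^2(0,z_i)\bigr)$, avoids model-specific computation, and would transfer verbatim to any Hadamard manifold. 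Second, the paper stops after verifying the assumptions, implicitly invoking uniqueness of the Karcher mean on Hadamard manifolds to pass from $\nabla C(w_t)\to 0$ to convergence of $w_t$; you make this step explicit via $1$-strong geodesic convexity of $C$ (Hessian of $\frac12 d^2(\cdot,z_i)$ bounded below by the identity in nonpositive curvature), giving $\norm{\nabla C(w)}\geq d(w,m)$ and hence $w_t\to m$ directly, which is a genuine and welcome completion of the argument rather than a deviation from it.
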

\begin{proof}The conditions of Theorem \ref{thm2} are easily checked.  Assumption 1: it is easy to see on the figure that Assumption 1 is  verified  with $v=0$, and $S$ being the radius of an open geodesic ball centered at $0$ and containing all the points $z_1,\cdots,z_N$. More technically, suppose $d(w,0)>\sqrt S$. The quantity $\langle\exp_w^{-1}(0),H(z_i,w)\rangle_w$ is equal to $-d(w,0) H(z_i,w)\cdot w/(1-\norm{w}^2)^2=-\lambda((1-\norm{w}^2)(\norm{w}^2-z_i\cdot w)+\norm{w-z_i}^2 \norm{w}^2)$ where $\lambda$ is a positive quantity bounded away from zero for $d(w,0)>\sqrt S$. As  there exists $\beta>0$ such that $\norm{w}-\norm{z_i}\geq\beta$, and $\norm{w-z_i}\geq \norm{w}-\norm{z_i}$, the term $\langle\exp_w^{-1}(0),H(z_i,w)\rangle_w$ is negative and bounded away from zero, and so is its average over the $z_i'$s.
 Assumption 2: in dimension 2, the sectional curvature is known to be identically equal to $-1$.
 Assumption 3 is obviously satisfied as   $\norm{H(z,w)}=d(z,w)\leq d(z,0)+d(0,w)\leq \sqrt S+d(0,w)=\alpha(w)$ by triangle inequality.

\end{proof}
Note that, one could object that in general finding a function
$f(w_t)$ satisfying Assumption 3 of Theorem \ref{thm2} requires
knowing $d(w_t,v)$ and thus requires some knowledge of the point $v$
of Assumption 1. However, we claim (without proof) that in
applications on Hadamard manifolds, there may be obvious choices for
$v$. This is the case in the present example where finding  $v$ such
that assumptions 1-3 are satisfied requires very little (or no)
knowledge on the point the algorithm is supposed to converge to.
Indeed, $v=0$ is a straightforward choice that always fulfills the
assumptions. This choice is convenient for calculations as the
geodesics emanating from $0$ are radiuses of the disk, but many
other choices would have been possible.

\subsection{Identification of a fixed rank symmetric positive semi-definite matrix}\label{PSD:sec}

To illustrate the benefits of the approach on a recent non-linear  problem, we focus in this section on an algorithm of \cite{meyer-11}, and we prove new rigorous convergence results.  Least Mean Squares (LMS) filters have been extensively utilized in adaptive filtering for online regression.
Let ${x}_t\in\RR^n$ be the input vector, and $y_t$ be the output
defined by $
y_t=w^T{x}_t+\nu_t
$
where the unknown vector $w\in\RR^n$ is to be identified (filter weights), and $\nu_t$ is a noise. At each step we let $z_t=(x_t,y_t)$ and the approximated cost function is $
Q(z_t,w_t)= \frac{1}{2} (w^T x_t-y_t)^2$.
Applying the steepest descent leads to the   stochastic gradient algorithm known as LMS: $
    {w}_{t+1} = w_t-\gamma_t ({w_t}^T{x}_t - y_t) {x}_t$.

We now consider a non-linear generalization of this problem  coming from the  machine learning field (see e.g. \cite{ratsch}), where $x_t\in\RR^{n}$ is the input,  $y_t\in\RR$ is the output, and the matrix counterpart of the linear model  is
\begin{align}\label{matrix:lin:eq}
y_t=\tr{W x_t x_t^T}= x_t^TW x_t
\end{align}
where $W\in\RR^{n\times n}$ is an unknown positive semi-definite matrix to be identified.  In data mining, positive semi-definite matrices $W$ represent a kernel or a Mahalanobis distance, i.e. $W_{ij}$ is the scalar product, or the distance, between instances $i$ and $j$. We assume at each step an expert provides an estimation of  $W_{ij}$ which can be viewed as a  random output.  The goal is to  estimate the matrix $W$ online.  We let $z_t=(x_t,y_t)$ and we will apply our stochastic gradient method to the cost function $C(W)=\mathbb E_z Q(z,W)$  where $
Q(z_t,W_t)= \frac{1}{2} (x_t^TW_t x_t-y_t)^2= \frac{1}{2} (\hat y_t-y_t)^2$.

Due to the large amount of data available nowadays, matrix classification algorithms tend to be applied to computational problems of
ever-increasing size. Yet, they need  be adapted to remain tractable, and the matrices' dimensions need to be reduced so that the matrices are storable.  A wide-spread topical method consists of working with low-rank approximations. Any rank $r$ approximation
of a positive definite matrix can be factored as $A=G G^T$ where $G \in \RR^{n \times r}$. It is then
 greatly reduced in size if $r\ll n$,
leading to a reduction of the numerical cost of typical matrix operations from $O(n^3)$ to  $O(nr^2)$, i.e. linear complexity. This fact has motivated the development of low-rank kernel and Mahalanobis distance learning \cite{kulis-06}, and geometric understanding of the set of  semidefinite positive matrices of fixed rank:
\begin{equation*}
    S_+(r,n)=\set{W\in\mathbb{R}^{n\times n}}{W=W^T\succeq 0,\text{rank}(W)=r}.
\end{equation*}

\subsubsection{Proposed algorithm and convergence results}

To endow $S_+(r,n)$ with a metric we start from the square-root factorization $
     {W} =  {G} {G}^T,
$
where  ${G}\in\mathbb{R}_{*}^{n\times r}$,  i.e. has rank $r$. Because the factorization is invariant by rotation, the search space is   identified to the quotient
   $\FixedRank\simeq\mathbb{R}_{*}^{n\times r}/\OG{r}$,
which represents the set of equivalence classes
\begin{equation*}
    [ {G}] = \set{ {G} {O}}{ {O}\in\OG{r}}.
\end{equation*}
The Euclidian metric
$
    \bar{g}_{ {G}}( {\Delta}_1, {\Delta}_2) = \tr{{\Delta}_1^T {\Delta}_2}$, for $\Delta_1,\Delta_2\in\RR^{n\times r}$ tangent vectors at $G$, is invariant  along the equivalence classes. It thus induces a well-defined metric  $
    g_{[{G}]}(\xi,\zeta) $ on the quotient, i.e. for $\xi,\zeta$ tangent vectors at $[G]$ in $\FixedRank$. Classically \cite{absil-book}, the tangent vectors of the quotient space $\FixedRank$ are identified to the projection onto the horizontal space (the orthogonal space to $[G]$) of tangent vectors of the total space $\RR_*^{n\times r}$.  So tangent vectors at $[ {G}]$ are represented by the set of horizontal tangent vectors
$\{ \Sym( {\Delta}) {G},  {\Delta}\in \mathbb{R}^{n\times n}\}$,
where  $\Sym(A)=(A+A^T)/2$.
The horizontal gradient of $Q(z_t,G_t)$
is the unique horizontal vector $H(z_t,G_t)$ that satisfies the definition of the Riemannian gradient. In the sequel we will systematically identify an element $G$ to its equivalence class $[G]$, which is a matrix of  $S_+(r,n)$.  For more details on this manifold see \cite{journee}.
Elementary computations yield $
H(z_t,G_t) = 2(\hat{y}_t - y_t) {x_t}{x_t}^T {G_t}
$, and \eqref{manifoldf:grad:eq}  writes
\begin{equation}\label{eq:batch-flat}
    G_{t+1} =G_t -\frac{\gamma_t}{f(G_t)}  (\norm{G_t^Tx_t}^2 - y_t) x_t x_t^T{G_t}
\end{equation}where we choose  $f(G_t)=\max(1,\norm{G_t}^6)$ and where the sequence $(\gamma_t)_{t\geq 0}$ satisfies condition \eqref{step:eq}. This non-linear algorithm is   well-defined on the set of equivalence classes, and automatically enforces the rank and positive semi-definiteness constraints of the parameter $G_tG_t^T=W_t$.

\begin{prop}\label{mtns:prop} Let $(x_t)_{t\geq 0}$ be a sequence of zero centered  random vectors of $\RR^n$ with independent identically and normally distributed  components. Suppose $y_t=x_t^TVx_t$ is generated by some  unknown parameter $V\in\FixedRank$. The Riemannian gradient descent \eqref{eq:batch-flat}  is such that $G_tG_t^T=W_t\to W_\infty$ and $\nabla C(W_t)\to 0$ a.s.  Moreover, if $W_\infty$ has rank $r=n$ necessarily $W_\infty= V$ a.s. If $r<n$, necessarily $W$ a.s. converges to an invariant subspace of $V$ of dimension $r$. If this is the dominant subspace of $V$, then $W_\infty=V$.
\end{prop}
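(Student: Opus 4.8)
The plan is to apply Theorem~\ref{thm2} not on the quotient $\FixedRank$ itself but on the total space. Since the recursion \eqref{eq:batch-flat} is written explicitly in the factor $G$ and is $\OG{r}$-equivariant, it is enough to analyse it on $\mathcal M=\RR^{n\times r}$ equipped with the Euclidean metric $\bar g_{G}(\Delta_1,\Delta_2)=\tr{\Delta_1^T\Delta_2}$: this is a flat, hence Hadamard, manifold on which $\exp_G(\Delta)=G+\Delta$ and $\exp_G^{-1}(v)=v-G$, so \eqref{eq:batch-flat} is exactly the modified update \eqref{manifoldf:grad:eq} for the lifted cost $\widetilde C(G)=C(GG^T)=\tfrac12\,\mathbb E_x\big[(x^TGG^Tx-x^TVx)^2\big]$ and the loss gradient $H(z,G)=2(x^TGG^Tx-x^TVx)xx^TG$. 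Note $\widetilde C$ is a degree-$4$ polynomial in $G$ (the Gaussian moments being finite), hence $C^\infty$. I would then run Theorem~\ref{thm2} with base point $v=0$.

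Second, I would verify Assumptions 1--3. Assumption 2 is immediate, since the curvature vanishes and any $\kappa<0$ is a (non-tight) lower bound. In Assumption 3 the curvature term then drops, and it suffices that $f(G)^2$ dominate $1$, $\mathbb E_x\norm{H(z,G)}^2$ and $\mathbb E_x\big[(2\norm{H(z,G)}\norm{G}+\norm{H(z,G)}^2)^2\big]$; from $\norm{H(z,G)}\le 2(\norm{G}^3+\norm{V}\norm{G})\norm{x}^4$ and finiteness of all Gaussian moments, these are $O(1+\norm{G}^{12})$, so $f(G)=\max(1,\norm{G}^6)$ works (up to an absolute multiplicative constant that may be absorbed into $f$). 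For Assumption 1, the Gaussian fourth-moment identity $\mathbb E_x[(x^TAx)(x^TBx)]=\tr A\,\tr B+2\tr{AB}$ gives, with $W=GG^T$,
\[
\langle\exp_G^{-1}(0),\nabla\widetilde C(G)\rangle=-\langle G,\nabla\widetilde C(G)\rangle=-2\big[(\tr W)^2+2\norm{W}_F^2-\tr V\,\tr W-2\tr{VW}\big],
\]
and since $W\succeq 0$ one has $\norm{W}_F\ge\norm{G}_F^2/\sqrt r$, so the bracket is positive as soon as $\norm{G}_F^2=D(G,0)$ exceeds some $S$ depending only on $V$; thus Assumption 1 holds with $v=0$. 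Theorem~\ref{thm2} then yields that $\widetilde C(G_t)$ converges a.s. and $\nabla\widetilde C(G_t)\to 0$ a.s.; moreover, $\widetilde C$ being coercive in $W$ (hence in $G$), the convergence of $\widetilde C(G_t)$ forces $(G_t)$ to lie in a fixed compact set eventually, a.s.

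Third, I would transfer the conclusion to $W_t=G_tG_t^T$. As $\pi\colon G\mapsto GG^T$ is a Riemannian submersion onto $\FixedRank$ at full-rank points and $\nabla\widetilde C$ is horizontal, $\nabla\widetilde C(G_t)\to 0$ gives $\nabla C(W_t)\to 0$ a.s. For the convergence of $W_t$ itself I would describe the critical set: $\nabla\widetilde C(G)=2[(\tr M)I+2M]G=0$ with $M=W-V$ forces, on right-multiplying by $G^T$, $[(\tr M)I+2M]W=0$, so every vector of $\mathrm{range}(W)$ is an eigenvector of $M$, hence of $V$; therefore $\mathrm{range}(W)$ is a $V$-invariant subspace spanned by eigenvectors of $V$, and on it the eigenvalues of $W$ are pinned down by a linear relation in $\tr M$. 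When the eigenvalues of $V$ are simple there are finitely many such $W$, so the connected set of limit points of the bounded sequence $(W_t)$ (note $W_{t+1}-W_t\to 0$) reduces to a single point, i.e. $W_t\to W_\infty$. Finally I would identify $W_\infty$: if $\mathrm{rank}\,W_\infty=r=n$ then $G_\infty$ is invertible, so $(\tr M)I+2M=0$, and taking the trace gives $\tr M=0$, hence $M=0$ and $W_\infty=V$; if $r<n$ then $\mathrm{range}(W_\infty)$ is an $r$-dimensional $V$-invariant subspace, and when it is the dominant one the linear relation collapses to $\tr M=0$ (one line), giving again $W_\infty=V$.

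The hard part is the passage from ``$\widetilde C(G_t)$ converges and $\nabla\widetilde C(G_t)\to0$'' to genuine convergence of $W_t$ to a single matrix: this needs the critical set of $\widetilde C$ to be essentially discrete, which can fail when $V$ has repeated eigenvalues --- there the fixed points form a continuum of rotated copies inside an eigenspace --- so either a genericity assumption on the spectrum of $V$ is needed, or the argument must be refined to show that the limiting factor still lies in a single $\OG{r}$-orbit; one must also ensure the limit does not drop rank, at which points $\pi$ fails to be a submersion and $\nabla C$ is not even defined on $\FixedRank$. A secondary, purely computational, point is to check that the explicit choice $f(G)=\max(1,\norm{G}^6)$ --- rather than merely some admissible $f$ --- really dominates the three quantities of Assumption 3, which rests on the exact Gaussian moment bounds above. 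Optionally, a local analysis of the mean ODE in the spirit of Oja would further show that the dominant invariant subspace is its only stable equilibrium.
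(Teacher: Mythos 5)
Your proposal follows essentially the same route as the paper: lift the problem to the flat total space $\RR^{n\times r}$ (a Hadamard manifold), take $v=0$, verify Assumptions 1--3 with $f(G)=\max(1,\norm{G}^6)$, invoke Theorem~\ref{thm2}, and then characterize the critical points via $[( \tr{M})I+2M]G=0$ with $M=GG^T-V$, concluding $M=0$ in the full-rank case and $V$-invariance of the range when $r<n$. The only substantive differences are cosmetic: you use the Gaussian fourth-moment identity directly, whereas the paper proves a small lemma for the linear map $U(M)=\mathbb E_x(\tr{xx^TM}xx^T)$ under more general moment assumptions ($b>a^2>0$) and specializes to $b=3a^2$ for the $r<n$ case; and you absorb the multiplicative constant in Assumption 3 into $f$, while the paper absorbs it into the step-size sequence $\gamma_t$ --- the two are equivalent. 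Regarding the ``hard part'' you flag (passing from convergence of $\widetilde C(G_t)$ and $\nabla\widetilde C(G_t)\to 0$ to convergence of $W_t$ to a single limit, the possible continuum of critical points when $V$ has repeated eigenvalues, and the rank-drop issue where $\pi$ fails to be a submersion): the paper does not resolve these either --- it only analyzes ``the set of possible asymptotic values'' and relegates stability of the dominant subspace to a remark about the averaged algorithm --- so your caution identifies a genuine looseness in the original argument rather than a missing step specific to your own.
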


The last proposition can be completed with the following fact: it can be  easily proved that the dominant subspace of $V$ is a stable equilibrium of the  averaged algorithm. As concerns for the other invariant subspaces of $V$, simulations indicate they are unstable equilibria. The convergence to $V$
 is thus always expected in simulations.
\begin{proof}
As here the Euclidian gradient of the loss with respect to the
parameter $G_t$ coincides with its projection onto the  horizontal
space $H(z_t,G_t)$, and is thus a tangent vector to the manifold, we
propose to apply Theorem \ref{thm2} to the Euclidian space
$\RR^{n\times r}$, which is of course a Hadamard manifold. This is a
simple way to avoid to compute the sectional curvature of
$\FixedRank$. Note that,  the adaptive step $f(G_t)$ introduced in
Theorem \ref{thm2} and the results of Theorem \ref{thm2} are
nevertheless needed, as the usual assumption  $\mathbb
E_x\norm{H(x,G)}^k\leq A+B\norm{G}^k$ of the Euclidian case is
violated. In fact the proposition can be proved under slightly more
general assumptions: suppose the components of the input vectors
have moments up to the order 8, with second and fourth moments
denoted by $a=\mathbb E(x^i)^2$ and $b=\mathbb E(x^i)^4$ for $1\leq
i\leq n$ such that $b>a^2>0$. We begin with a preliminary result:
\begin{lem}
Consider the linear (matrix) map
 $
U:M\mapsto \mathbb E_x (\tr{ x x^T M}{x}{x}^T)
$.
  $U(M)$ is the matrix whose coordinates are $a^2(M_{ij}+M_{ji})$ for $i\neq j$, and $\tr{M}a^2+M_{ii}(b-a^2)$ for $i=j$.
\end{lem}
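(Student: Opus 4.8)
The plan is to compute $U(M)$ one entry at a time. By linearity of the expectation and the definition of $U$, the $(k,\ell)$ entry of $U(M)$ is
\begin{align*}
U(M)_{k\ell} = \mathbb{E}_x\Bigl(\bigl(\textstyle\sum_{i,j} x_i x_j M_{ij}\bigr) x_k x_\ell\Bigr) = \sum_{i,j} M_{ij}\, \mathbb{E}_x(x_i x_j x_k x_\ell),
\end{align*}
so the whole computation reduces to evaluating the fourth-order moments $\mathbb{E}_x(x_i x_j x_k x_\ell)$. Here the point is to use that the components $x_1,\dots,x_n$ are independent and centered: $\mathbb{E}_x(x_i x_j x_k x_\ell)$ factors into a product of moments over the distinct indices appearing among $i,j,k,\ell$, and any index occurring an odd number of times forces a factor $\mathbb{E}(x_m)=0$ (an index occurring once contributes that factor directly; an index occurring three times leaves a fourth index occurring once, which contributes it). Consequently $\mathbb{E}_x(x_i x_j x_k x_\ell)$ is nonzero only when either all four indices coincide, in which case its value is $b=\mathbb{E}((x^i)^4)$, or the four indices split into two distinct pairs, in which case its value is $\mathbb{E}((x^i)^2)\,\mathbb{E}((x^j)^2)=a^2$.

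Next I would split into the off-diagonal and diagonal cases. If $k\neq\ell$, the four indices cannot all coincide, so a nonzero term requires $\{i,j\}=\{k,\ell\}$, i.e. $(i,j)=(k,\ell)$ or $(i,j)=(\ell,k)$, each contributing $a^2$; hence $U(M)_{k\ell}=a^2(M_{k\ell}+M_{\ell k})$. If $k=\ell$, then $x_k x_\ell = x_k^2$ already forms a pair, so a nonzero term requires $i=j$ as well; summing over $i$ and separating the term $i=k$ (moment $b$) from the terms $i\neq k$ (moment $a^2$) gives
\begin{align*}
U(M)_{kk} = \sum_{i\neq k} a^2 M_{ii} + b\, M_{kk} = a^2\,\tr{M} + (b-a^2)\,M_{kk},
\end{align*}
which is exactly the claimed formula.

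This is an elementary moment computation and I do not expect a genuine obstacle; the only place needing a little care is the bookkeeping of which fourth moments survive — in particular not double-counting the case $i=j=k=\ell$ inside the diagonal sum, and invoking only independence and centering of the components (neither symmetry nor Gaussianity is used). Note also that only moments up to order four enter this lemma; the order-eight hypothesis stated in Proposition \ref{mtns:prop} is needed later, to control $\mathbb{E}_x\norm{H(z,G)}^k$ for $k\le 4$ when verifying Assumption 3.
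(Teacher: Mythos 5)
Your computation is correct: expanding $U(M)_{k\ell}=\sum_{i,j}M_{ij}\,\mathbb{E}(x_ix_jx_kx_\ell)$ and using only independence, zero mean, and the common second and fourth moments $a,b$ gives exactly the stated entries, and your bookkeeping (including the case of an index appearing three times, which avoids assuming a symmetric distribution) is accurate. The paper states this lemma without proof, treating it as an elementary moment computation, so your entry-wise argument simply supplies the standard verification the paper omits; your closing remark about the order-eight moments being needed only later for Assumption 3 is also consistent with how the paper uses the hypotheses.
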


Assumption 1:  We let $v=0$. Let ``$\cdot$" denote the usual scalar product in $\RR^{n\times r}$. For $\norm{G}^2$ sufficiently large,  $G\cdot(v-G)=-\tr{[\mathbb E (\tr{ x x^T (GG^T-V)}{x}{x}^TG]G^T)}< -\epsilon<0$, which means the gradient tends to make the norm of the parameter decrease on average, when it is  far from the origin. Indeed let $P=GG^T$. We want to prove that $\tr{U(P)P}> \tr{U(V)P}+\epsilon$ for sufficiently large $\norm{G}^2=\tr{P}$. If we choose a basis in which $P$ is diagonal we have $\tr{U(P)P}=a^2\tr{P}^2+(b-a^2)\tr{P^2}=a^2(\sum\lambda_i)^2+(b-a^2)(\sum\lambda_i^2)$ where $\lambda_1,\dots,\lambda_n$ are the eigenvalues of $P$. We have also $\tr{U(V)P}=a^2\tr{P}\tr{V}+(b-a^2)\sum(\lambda_i V_{ii})$. For sufficiently large $\tr{P}$, $\tr{P}^2$ is arbitrarily larger than $\tr{P}\tr{V}$. We have also $(\sum(\lambda_i^2)\sum(V_{ii}^2))^{1/2}\geq\sum(\lambda_i V_{ii})$ and  $(\sum\lambda_i^2)^{1/2}\geq\frac{1}{n}\tr{P}$ by Cauchy-Schwartz inequality. Thus for    $\tr{P}\geq n\sum(V_{ii}^2)^{1/2}$, we have $(\sum\lambda_i^2)^{1/2}\geq\sum(V_{ii}^2)^{1/2}$ and thus $\sum\lambda_i^2\geq(\sum(\lambda_i^2)\sum(V_{ii}^2))^{1/2}\geq\sum(\lambda_i V_{ii})$.
  Assumption 2 is satisfied as the curvature of an Euclidian space  is zero. For Assumption 3, using the fact that for $P,Q$ positive semi-definite $\tr{PQ}\leq(\tr{P^2}\tr{Q^2})^{1/2}\leq\tr{P}\tr{Q}$, and that $\tr{GG^T}=\norm{G}^2$ and $\tr{xx^T}=\norm{x}^2$, it is easy to prove there exists $B>0$ such that
$ \norm{H(x, G)}^2=(\norm{G^Tx}^2 - y)^2 \norm{x x^T{G}}^2\leq (\norm{G}^6+B\norm{G}^2)\norm{x}^8$. Thus there exists $\mu >0$ such that $[\max(1,\norm{G}^3)]^2$  is greater than $\mu\mathbb E_z \norm{H(x, G)}^2$. On the other hand, there exists $\lambda$ such that $\lambda\mathbb E_z(2\norm{H(x, G)}\norm{ G} +\norm{H(x, G)}^2)^2\leq \max(1,\norm{G}^6)$. But the alternative step $\max(\mu,\lambda)\gamma_t$ satisfies condition \eqref{step:eq}.

Let us analyze the set of possible asymptotic values. It is characterized by $U(GG^T-V)G=0$. Let $M$ be the symmetric matrix $GG^T-V$. If $G$ is invertible, it means  $U(M)=0$. Using the lemma above we see that the off-diagonal terms of $M$ are equal to 0, and summing the diagonal terms $((n-1)a^2+b)\tr{M}=0$ and thus $\tr{M}=0$ which then implies $M=0$ as $b>a^2$.  Now suppose $r<n$. If $b=3a^2$, as for the normal distribution, $U(M)=2M+\tr{M} I$ and $U(GG^T-V)G=0$ implies $G(kI+2G^TG)=2VG$ for some $k\in\RR$. Thus (as in example \ref{grass:sec}) it implies $G$ is an invariant subspace of $V$. Similarly to  the full rank case, it is easy to prove  $W_\infty=V$ when $V$ and $G$ span the same subspace.
\end{proof}

\subsubsection{Gain tuning}

The condition \eqref{step:eq} is common in stochastic approximation. As in the standard filtering problem, or in Kalman filter theory, the more noisy observations of a constant process one gets, the weaker the gain of the filter becomes. It is generally recommended to set
$
\gamma_t={a}/({1+b~t^{1/2+\epsilon}})$
where in theory  $\epsilon>0$, but in practice we propose to take $\epsilon=0$, leading to the family of gains
\begin{align}\label{gain}
\gamma_t=\frac{a}{1+b~t^{1/2}}
\end{align}
If the gain remains too high, the noise will make the estimator oscillate around the solution. But   a  low   gain leads to slow convergence. The coefficient $a$ represents the initial gain. It must be high enough  to ensure sufficiently fast  convergence but not excessive to avoid amplifying the noise. $b$ is concerned with the asymptotic behavior of the algorithm and must be set such that the algorithm is insensitive to noise in the final iterations (a high noise could destabilize the final matrix identified over a given training set). $a$ is generally set experimentally using a reduced number of iterations, and $b$ must be such that the variance of the gradient is very small compared to the entries of $G_t$ for  large $t$.

\subsubsection{Simulation results}

Asymptotic convergence of $GG^T$ to the true value $V$ is always achieved in simulations. When $t$ becomes large,  the behavior of the stochastic algorithm is very close to the behavior of the averaged gradient descent algorithm $J_{t+1}=J_t-\frac{\gamma_t}{f(J_t)}\mathbb E_z H(z,J_t)$, as illustrated in Figure \ref{fig:1}. This latter algorithm has a well characterized behavior in simulations: in a first phase the estimation error decreases rapidly, and in a second phase it slowly converges to zero. As the number of iterations increases, the estimation error becomes arbitrarily small.

In all the experiments, the estimated matrices have an initial norm
equal to $\norm{V}$.  This is because a large initial norm
discrepancy induces a rapid decrease in the estimation error, which
then would seem to tend very quickly to zero compared to its initial
value. Thus, a fair experiment requires initial comparable norms. In
the first set of numerical experiments Gaussian input vectors $
x_t\in\RR^{100}$  with 0 mean and identity covariance matrix are
generated.   The output is generated via model \eqref{matrix:lin:eq}
where $V\in\RR^{100\times 100}$ is a symmetric positive
semi-definite matrix with  rank $r=3$. The results are illustrated
on Figure \ref{fig:1} and indicate the matrix $V$ is asymptotically
well identified.

\begin{figure}
    \centering
    \includegraphics[width=\textwidth]{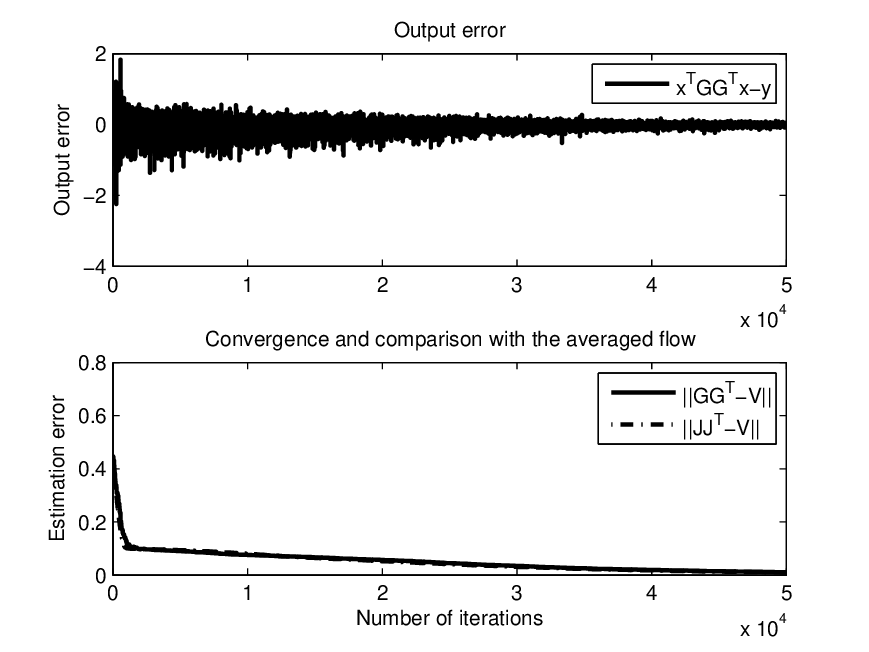}
    \caption{Identification of a rank $3$ matrix $V$ of dimension $100\times 100$ with algorithm \eqref{eq:batch-flat}. Top plot: output (or classification) error versus the number of iterations. Bottom plot: estimation error for the stochastic algorithm $\norm{G_tG_t^T-V}$ (solid line) and estimation error for the deterministic averaged algorithm $J_{t+1}=J_t-\frac{\gamma_t}{f(J_t)}\mathbb E_z H(z,J_t)$ (dashed line). The curves nearly coincide. The chosen gain is $ \gamma_t=.001/(1+t/5000)^{1/2}$.}
    \label{fig:1}
\end{figure}

\begin{figure}
    \centering
    \includegraphics[width=\textwidth]{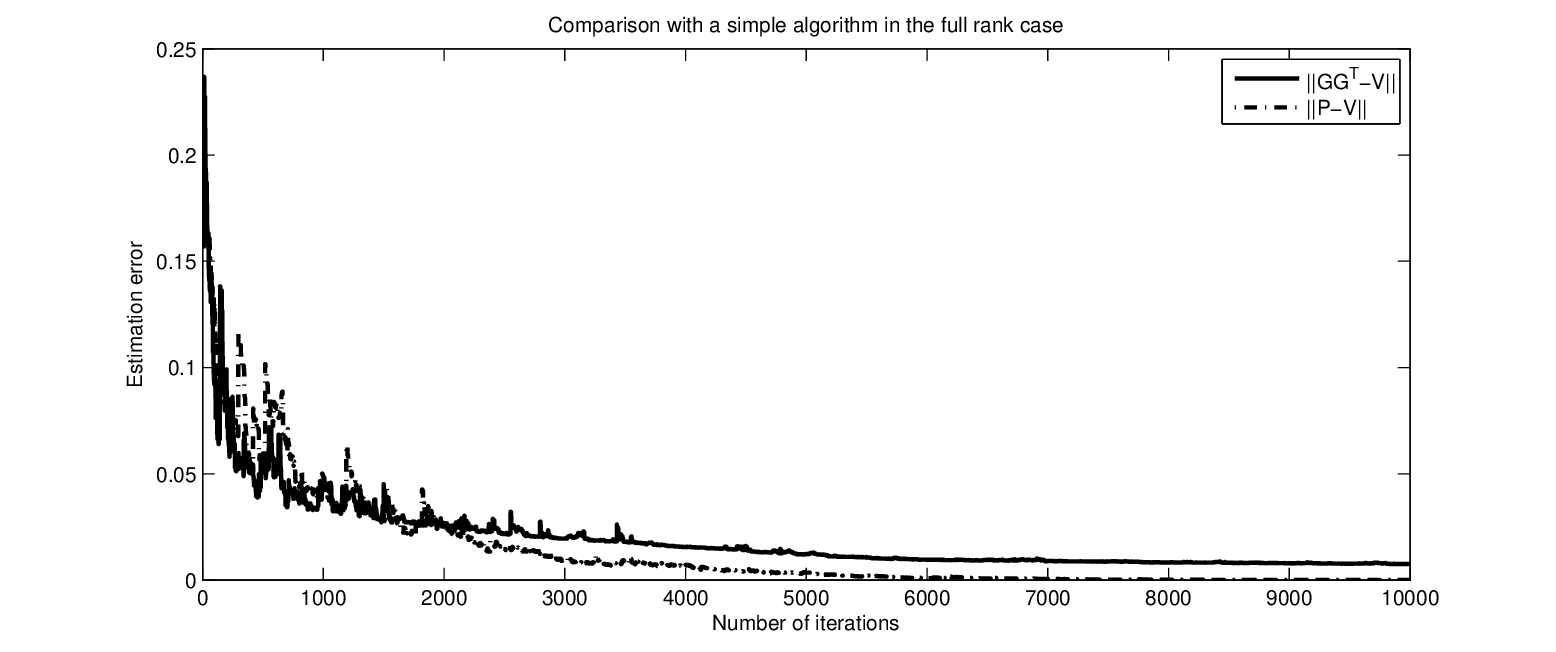}
    \caption{Full-rank case with $r=n=20$. Plot of the estimation error for algorithm \eqref{eq:batch-flat} (solid line) and  \eqref{naive:eq} (dashed line). The gain is $\gamma_t=.01/(1+t/500)^{1/2}$.}
    \label{fig:2}
\end{figure}

In order to compare the proposed method with another algorithm,  we  propose to focus on the full-rank case, and compare the algorithm with a naive but efficient technique. Indeed, when $r=n$,   the cost function $C(W)$ becomes convex in the parameter $W\in P_+(n)$ and the only difficulty is to numerically maintain $W$ as positive semi-definite. Thus, a simple method to attack the problem of identifying $V$ is to derive a stochastic gradient algorithm in $\RR^{n\times n}$, and to project at each step the  iterate on the cone of positive semi-definite matrices, i.e.,\begin{align}P_0\in S_+(n,n),\qquad
P_{t+1}=\pi(P_t-{\gamma_t}\nabla Q(z_t,P_t))\label{naive:eq}
\end{align}where $\pi$ is the projection on the cone. It has been proved in \cite{higham} this projection can be performed by diagonalizing the matrix, and setting all the negative eigenvalues equal to zero.  Figure \ref{fig:2} illustrates the results. Both algorithms have comparable performances. However, the proposed algorithm \eqref{eq:batch-flat} is a little slower than the stochastic algorithm \eqref{naive:eq} which is as expected, since this latter algorithm takes advantage of the convexity of the averaged cost function in the full-rank case.

However, the true advantage of the proposed approach is essentially computational, and becomes more apparent when the rank is low. Indeed, when $n$ is very large and  $r\ll n$ \eqref{eq:batch-flat} has linear complexity in $n$, whereas a method based on diagonalization requires at least $O(n^2)$ operations  and may become intractable. Moreover, the problem is not convex anymore due to the rank constraint and  an approximation then projection technique can lead to degraded performance. Thus, comparing both techniques \eqref{eq:batch-flat} and a technique based on diagonalization such as \eqref{naive:eq} is pointless for low-rank applications, and finding relevant algorithms is an involved task that has been recently addressed in several papers, see e.g.  \cite{meyer-11,shalit}. Since in the present paper  the emphasis is put on mathematical convergence results, the interested reader can refer to \cite{meyer-thesis} where  \eqref{eq:batch-flat} and its variants have been extensively tested on several databases. They are shown to compete with state of the art methods, and to scale very well when the  matrix is of very large dimension (a variant was tested on the Netflix prize database).  They have also  been  recently compared to more involved Riemannian methods in \cite{shalit}.

\subsection{Non-linear gossip algorithm for decentralized covariance matrix estimation}

The problem of computing distributed averages on a network appears in many applications,   such as multi-agent systems, distributed data fusion,
and decentralized optimization. The underlying idea is to replace expansive wiring by a network where   the information exchange is reduced owing to various limitations in data communication. A way to compute distributed averages that has gained popularity over the last years is the so-called gossip algorithm \cite{boyd-gossip}. It is a randomized  procedure where at each iteration, a node communicates with one neighbor and both nodes set their value equal to the average of their current values.  The goal is  for all nodes to reach a common intermediate value as quickly as possible, with little computational power. Gossip algorithms are a special case of distributed optimization algorithms, where stochastic gradient descent plays a central role (see e.g. \cite{tsitsiklis}). When applied to multi-agent systems, this algorithm allows the agents to reach a consensus, i.e. agree on a common quantity. This is the well known consensus problem \cite{moreau}. When the  consensus space is not linear (for instance a group of agents wants to agree on a common direction of motion, or a group of oscillators on a common phase) the methods need to be adapted to the non-linearities of the problem (see e.g. \cite{sepulchre-08}). Consensus on manifolds has recently received  increasing attention, see e.g. the recent work of \cite{tron,sarlette} for deterministic procedures.  In \cite{sarlette-08},  a gossip algorithm on the circle has been proposed and analyzed.

In this section, we address the problem  of estimating a covariance matrix $W$ on a sensor network in a decentralized way (see e.g. \cite{ljung2}): suppose each node $i$ provides measurements $y_i=[y_{i}^1,\cdots,y_{i}^m]\in\RR^{n\times m}$ where the vectors $y_{i}^j\in\RR^n$ are zero centered normally distributed random vectors with a covariance matrix to be estimated. After a local computation, each node is assumed to possess an initial estimated covariance matrix $W_{i,0}$. Neighboring nodes are allowed to exchange information  at random times. We assume the nodes  are labeled according to their proximity as follows: for $i\leq m-1$ the nodes $i$ and $i+1$ are neighbors.  At each time  step $t$, we suppose a node $i<m$ is picked randomly with probability $p_i>0$ (where $p_i$ represents for instance the frequency of availability of the communication channel between nodes $i$ and $i+1$)  and the  nodes in question update their covariance estimates $W_{i,t}$ and $W_{i+1,t}$. Our goal is that they reach consensus on a common intermediate value. To do so,  the procedure \eqref{manifold:grad:eq} is implemented using an interesting alternative  metric  on the cone of positive definite matrices $P_+(n)$.

\subsubsection{A statistically meaningful distance on $P_+(n)$} Information geometry allows to define Riemannian distances  between probability distributions that are very meaningful from a statistical point of view. On the manifold of symmetric positive definite matrices $P_+(n)$, the so-called Fisher Information Metric for two tangent vectors $X_1,X_2$ at $P\in P_+(n)$ is given by
\begin{align}\label{inf:metric}
\langle X_1,X_2\rangle_P=\tr{X_1P^{-1}X_2P^{-1}}
\end{align}
It defines an infinitesimal distance  that agrees with the celebrated Kullback-Leibler divergence between probability distributions: up to third order terms in a small symmetric matrix, say,  $X$ we have
$$
KL(\mathcal N(0,P)||\mathcal N(0,P+X))=\langle X,X\rangle_P
$$
for any $P\in P_+(n)$ where $\mathcal N(0,P)$ denotes  a Gaussian distribution of zero mean and covariance matrix $P$ and $KL$ denotes the Kullback Leibler divergence. The geodesic distance writes
 $
d(P,Q)=(\sum_{k=1}^n\log^2(\lambda_k))^{1/2}
 $ where $\lambda_1,\cdots,\lambda_n$ are the eigenvalues the matrix $PQ^{-1}$ and it represents the amount of information that separates $P$ from $Q$.

The introduced notion of statistical information  is easily understood from a simple variance estimation problem with $n=1$. Indeed, consider the problem of estimating the variance of a random vector $y\simeq\mathcal N(0,\sigma)$. In statistics, the Cramer-Rao bound provides a lower bound on the accuracy of any unbiased estimator $\hat \sigma$ of the variance $\sigma$: here it states $\mathbb E(\hat\sigma-\sigma)^2\geq\sigma^2$. Thus the smaller $\sigma$ is, the more potential information the distribution contains about $\sigma$.  As a result, two samples drawn respectively from, say, the distributions  $\mathcal N(0,1000)$ and $\mathcal N(0,1001)$ look much more similar  than
 samples drawn respectively from the distributions $\mathcal N(0,0.1)$ and $\mathcal N(0,1.1)$. In other words, a unit increment in the variance will have a small impact on the corresponding  distributions if initially $\sigma=1000$ whereas it will have a high impact if $\sigma=0.1$. Identifying zero mean Gaussian distributions with their variances, the Fisher Information Metric accounts for that statistical discrepancy as \eqref{inf:metric}  writes $\langle d\sigma,d\sigma\rangle_\sigma=(d\sigma/\sigma)^2$. But the Euclidian distance does not, as the Euclidian distance between the variances is equal to 1 in both cases.

The metric  \eqref{inf:metric} is also known as the natural metric on $P_+(n)$, and admits strong invariance properties (see Proposition \ref{propinv} below). These properties make the Karcher mean associated with this distance more robust to outliers than the usual arithmetic mean. This is the main reason why this distance has attracted ever increasing attention  in  medical imaging applications (see e.g. \cite{pennec-06}) and radar processing \cite{barbaresco} over the last few years.

\subsubsection{A novel randomized algorithm}

We propose  the following randomized procedure to tackle the problem above. At each step $t$, a node $i<m$ is  picked randomly  with probability $p_i>0$,  and both neighboring nodes $i$ and $i+1$ move their values towards each other along the geodesic linking them, to a distance $\gamma_t d(W_{i,t},W_{i+1,t})$ from their current position. Note that, for $\gamma_t=1/2$ the updated matrix $W_{i,t+1}$ is at exactly half Fisher information (geodesic) distance between $W_{i,t}$ and $W_{i+1,t}$. This is an application of update \eqref{manifold:grad:eq} where $z_t$ denotes the selected node at time $t$ and has probability distribution $(p_1,\cdots,p_{m-1})$, and where the average cost function writes
$$
C(W_1,\cdots,W_m)=\sum_{i=1}^{m-1}~p_i ~d^2(W_i,W_{i+1})
$$on the manifold $P_+(n)\times\cdots \times P_+(n).$ Using the explicit expression of the geodesics \cite{faraut},  update \eqref{manifold:grad:eq}  writes
\begin{equation}\label{goss:eq}
\begin{aligned}
W_{i,t+1}&=W_{i,t}^{1/2}\exp(\gamma_t\log(W_{i,t}^{-1/2}W_{i+1,t}W_{i,t}^{-1/2}))W_{i,t}^{1/2},\\
W_{i+1,t+1}&=W_{i+1,t}^{1/2}\exp(\gamma_t\log(W_{i+1,t}^{-1/2}W_{i,t}W_{i+1,t}^{-1/2}))W_{i+1,t}^{1/2}
\end{aligned}
\end{equation}

This algorithm has several theoretical advantages. First it is based on the Fisher information metric, and thus is natural from a statistical viewpoint. Then it has several nice properties as illustrated by the following two  results:

   \begin{prop}\label{propinv}
   The algorithm \eqref{goss:eq} is invariant to the action of $GL(n)$ on $P_+(n)$ by congruence.
   \end{prop}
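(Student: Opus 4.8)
The plan is to show that the congruence action $W \mapsto A W A^T$ for $A \in \mathrm{GL}(n)$ commutes with the update map \eqref{goss:eq}, so that running the algorithm on the transformed initial data $\{A W_{i,0} A^T\}$ and then undoing the congruence gives exactly the same trajectory as running it on $\{W_{i,0}\}$. Since at each step the only randomness is the choice of node $i$ (which is unaffected by the congruence), it suffices to verify the commutation for a single elementary two-node update applied to a pair $(W_i, W_{i+1})$.

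First I would recall the classical fact that the Fisher information metric \eqref{inf:metric} is $\mathrm{GL}(n)$-invariant under congruence: for $A \in \mathrm{GL}(n)$, the map $P \mapsto A P A^T$ is an isometry of $(P_+(n), \langle\cdot,\cdot\rangle)$, with differential $X \mapsto A X A^T$ on tangent vectors. Consequently it maps geodesics to geodesics and preserves geodesic distance, $d(APA^T, AQA^T) = d(P,Q)$. The update \eqref{goss:eq} moves $W_{i,t}$ along the geodesic from $W_{i,t}$ to $W_{i+1,t}$ to the point at fractional arclength $\gamma_t$; because this construction is expressed purely in terms of the geodesic and the distance, an isometry carries it through: the point at parameter $\gamma_t$ on the geodesic from $A W_{i,t} A^T$ to $A W_{i+1,t} A^T$ is precisely $A$ times (the point at parameter $\gamma_t$ on the geodesic from $W_{i,t}$ to $W_{i+1,t}$) times $A^T$. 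The same holds for the symmetric update of $W_{i+1,t}$, and the untouched nodes $W_{j,t}$, $j \notin \{i,i+1\}$, are trivially equivariant. Hence one elementary update commutes with congruence, and by induction on $t$ the whole trajectory does.

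Alternatively — and this is the version I would actually write out, since it avoids invoking the abstract isometry statement — I would verify the commutation directly from the closed form in \eqref{goss:eq}. The key algebraic identities are that for $A \in \mathrm{GL}(n)$ and $P \in P_+(n)$ one has $(APA^T)^{1/2} = A P^{1/2} R$ for some orthogonal $R$ (polar-type decomposition), together with the conjugation-equivariance of the matrix exponential and logarithm, $\exp(BMB^{-1}) = B\exp(M)B^{-1}$ and similarly for $\log$. Substituting $A W_{i,t} A^T$ and $A W_{i+1,t} A^T$ into \eqref{goss:eq} and tracking these factors, the orthogonal pieces $R$ cancel against their transposes inside the congruence, and one is left with $A\, W_{i,t+1}\, A^T$ where $W_{i,t+1}$ is the original update. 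I would present this as a short lemma-style computation.

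The main obstacle is bookkeeping rather than conceptual: one must handle the square-root factors $(APA^T)^{1/2}$ carefully, since $(APA^T)^{1/2} \neq A P^{1/2} A^T$ in general — only an orthogonal correction makes it work, and one has to check that correction drops out of the final expression. Using the isometry argument sidesteps this entirely, so the cleanest writeup is: (i) state that congruence is an isometry of the Fisher metric, (ii) observe that the gossip update at each node is an isometry-equivariant operation (take geodesic, go fraction $\gamma_t$ along it), (iii) conclude by induction on $t$, noting the node selection $z_t$ is unchanged. The only point deserving a remark is that $P_+(n)$ with the Fisher metric is complete and geodesically connected (indeed a Hadamard manifold), so the geodesic between any two iterates exists and is unique, making the update well defined on the whole manifold and the argument valid for all $\gamma_t$.
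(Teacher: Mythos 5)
Your proof is correct and takes essentially the same route as the paper, whose argument is exactly your item (i)--(iii): congruence by $G\in \mathrm{GL}(n)$ is an isometry of the Fisher metric, hence preserves geodesics and the geodesic distance, so the two-node geodesic update commutes with the action. Your optional closed-form verification (tracking the orthogonal polar factor in $(APA^T)^{1/2}=AP^{1/2}R$ and using conjugation-equivariance of $\exp$ and $\log$) is a sound, more explicit check of the same invariance, just not needed once the isometry property is invoked.
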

   \begin{proof}
   This is merely a  consequence of the invariance of the metric, and of the geodesic distance  $d(GP G^T,GQG^T)=d(P,Q)$ for any $P,Q\in P_+(n)$ and $G\in GL(n)$.
   \end{proof}
The proposition has the following meaning: after a linear change of coordinates,  if all the measurements $y_{i}^j$ are transformed into new measurements $Gy_{i}^j$, where $G$ is an invertible matrix on $\RR^n$, and the corresponding estimated initial covariance matrices are accordingly transformed into $GW_{i,0} G^T$, then the algorithm \eqref{goss:eq} is unchanged, i.e., for any node $i$ the algorithm with initial values $GW_{i,0} G^T$'s will yield  at time $t$ the matrix $GW_{i,t} G^T$, where $W_{i,t}$ is the updated value corresponding to the initial values $W_{i,0}$'s.  As a result, the algorithm will perform equally well for a given problem independently of the choice of coordinates in which the covariance matrices are expressed (this implies in particular invariance to the orientation of the axes, and invariance to change of units, such as meters versus feet, which can be desirable and physically meaningful in some applications). The most important result is  merely an application of the theorems of the present paper:

\begin{prop}If the sequence $\gamma_t$ satisfies the usual assumption \eqref{step:eq}, and is upper bounded by 1/2,  the covariance matrices  at each node converge a.s. to a common value.
\end{prop}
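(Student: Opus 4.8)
The plan is to realise the gossip recursion \eqref{goss:eq} as the intrinsic stochastic gradient iteration \eqref{manifold:grad:eq} on the product manifold $\mathcal M=P_+(n)^m$ equipped with the product of the Fisher metrics \eqref{inf:metric}, to show the trajectory stays in a compact set so that Theorem \ref{thm1} applies, and then to promote the resulting ``asymptotic consensus'' into genuine convergence of every node to one common matrix.

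First I would fix the geometry. Each factor $(P_+(n),g)$ is complete, simply connected, and of nonpositive sectional curvature, i.e.\ a Hadamard manifold, and so is $\mathcal M$; hence its injectivity radius is everywhere infinite and the squared geodesic distance, therefore the cost $C(w)=\sum_{i=1}^{m-1}p_i\,d^2(W_i,W_{i+1})$, is smooth on $\mathcal M$. As explained before the statement, \eqref{goss:eq} is an instance of \eqref{manifold:grad:eq} with $z_t$ the edge selected at step $t$ (the only randomness, with law $(p_1,\dots,p_{m-1})$) and $H(z_t,w_t)$ the Riemannian gradient of the loss attached to that edge, which is continuous and hence bounded on compacts. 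To confine the trajectory I would fix any $v\in P_+(n)$, set $R_0=\max_i d(v,W_{i,0})$, and use that closed geodesic balls of a Hadamard manifold are geodesically convex and compact: writing $\sigma_t$ for the unit-interval geodesic from $W_{i_t,t}$ to $W_{i_t+1,t}$, the update is $W_{i_t,t+1}=\sigma_t(\gamma_t)$, $W_{i_t+1,t+1}=\sigma_t(1-\gamma_t)$, which are convex combinations since $\gamma_t\le\tfrac12\le 1$; an induction on $t$ then keeps all $W_{i,t}$ in $\overline{B}(v,R_0)$, so the whole trajectory stays in the compact set $K=\overline{B}(v,R_0)^m$. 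All hypotheses of Theorem \ref{thm1} are met, so in particular $C(w_t)$ converges a.s.

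Next I would show $C(w_t)\to 0$ a.s. For the fixed $v$ set $\Phi_t(v)=\sum_{i=1}^m d^2(W_{i,t},v)$. Applying the Hadamard comparison inequality $d^2(\sigma(s),v)\le(1-s)d^2(x,v)+s\,d^2(y,v)-s(1-s)d^2(x,y)$ to the two endpoints of the selected edge with $s=\gamma_t$ and adding gives the pathwise bound
\[
\Phi_{t+1}(v)\ \le\ \Phi_t(v)-2\gamma_t(1-\gamma_t)\,d^2(W_{i_t,t},W_{i_t+1,t})\ \le\ \Phi_t(v),
\]
so $\Phi_t(v)$ is nonincreasing and converges along every trajectory. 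Conditioning on $\mathcal F_t$ and summing yields $\sum_t 2\gamma_t(1-\gamma_t)\,C(w_t)<\infty$ a.s., and $\gamma_t\le\tfrac12$ gives $\sum_t\gamma_t C(w_t)<\infty$ a.s.; since $\sum\gamma_t=\infty$ and $C\ge 0$ this forces $\liminf_t C(w_t)=0$, which together with the a.s.\ convergence of $C(w_t)$ from Theorem \ref{thm1} gives $C(w_t)\to 0$ a.s. As all $p_i>0$, this means $d(W_{i,t},W_{i+1,t})\to 0$ for each $i$, hence $\max_{i,j}d(W_{i,t},W_{j,t})\to 0$ a.s.: the nodes asymptotically agree.

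Finally I would upgrade this to convergence. Work on the full-measure event on which $C(w_t)\to 0$ and on which $\Phi_t(v)$ converges for every $v$ in a fixed countable dense set $D\subset P_+(n)$. Since $(W_{1,t})_t$ lies in the compact ball $\overline{B}(v,R_0)$ it has subsequential limits, and $\max_{i,j}d(W_{i,t},W_{j,t})\to 0$ forces every subsequential limit of $(W_{1,t},\dots,W_{m,t})$ to have the form $(W^\star,\dots,W^\star)$; along such a subsequence $\Phi_t(v)\to m\,d^2(W^\star,v)$, so $d^2(W^\star,v)=\Phi_\infty(v)/m$ for every subsequential limit $W^\star$ and every $v\in D$. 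Hence any two subsequential limits $W_1^\star,W_2^\star$ satisfy $d(W_1^\star,v)=d(W_2^\star,v)$ on $D$, therefore everywhere by continuity, therefore $W_1^\star=W_2^\star$ (evaluate at $W_1^\star$); the subsequential limit is unique and every node converges a.s.\ to a common $W_\infty\in P_+(n)$. The main obstacle is precisely this last step: Theorem \ref{thm1} alone yields only $\nabla C(w_t)\to 0$, i.e.\ asymptotic agreement, because the set of minimisers of $C$ is the whole ``diagonal'' submanifold and not an isolated point; turning asymptotic agreement into convergence of the iterates requires the auxiliary Lyapunov function $\Phi_t(v)$ — whose monotonicity is where nonpositive curvature and the bound $\gamma_t\le\tfrac12$ are used — together with the witness-point argument over a dense family of reference matrices. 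Everything else is routine Hadamard-manifold bookkeeping.
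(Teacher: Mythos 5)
Your proof is correct, and it is in fact more complete than the paper's own argument. The confinement step is identical: both you and the paper trap the trajectory in a compact geodesic ball using convexity of balls and $\gamma_t\le 1/2$, and then invoke Theorem \ref{thm1} (the Hadamard structure gives the infinite injectivity radius and the gradient is bounded on the compact set). Where you diverge is in how consensus is extracted. The paper reads it off the gradient: $\nabla_{W_m}C=-2p_{m-1}\exp_{W_m}^{-1}(W_{m-1})$, so $\nabla C(w_t)\to 0$ forces $d(W_{m-1,t},W_{m,t})\to 0$, and a cascade along the chain gives $d(W_{i,t},W_{i+1,t})\to 0$ for all $i$; it then asserts that the nodes ``converge to the same value.'' You instead obtain $C(w_t)\to 0$ from the CAT(0) comparison inequality applied to the Lyapunov function $\Phi_t(v)=\sum_i d^2(W_{i,t},v)$, and — crucially — you identify and close the gap the paper glosses over: since the minimizers of $C$ form the whole diagonal (a continuum), Theorem \ref{thm1} only yields asymptotic agreement of the nodes, not convergence of each iterate to a fixed matrix; your Fej\'er-monotonicity argument (monotone $\Phi_t(v)$ for $v$ in a countable dense set, plus uniqueness of subsequential limits) supplies that missing step. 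Two small points: the bound $\gamma_t\le 1$ already keeps the updated pair on the geodesic segment (the role of $\gamma_t\le 1/2$ in your estimate is the coefficient $\gamma_t(1-\gamma_t)\ge\gamma_t/2$), and the passage from the pathwise inequality to $\sum_t\gamma_t C(w_t)<\infty$ a.s.\ should be justified by the supermartingale/Robbins--Siegmund argument (or Fisk's Proposition \ref{fisk:prop}, as elsewhere in the paper), since the pathwise sum only controls the realized edge distances; as you state it, this is the intended and valid route.
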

\begin{proof}
This is simply an application of Theorem 1. Indeed, $P_+(n)$ endowed with the natural metric is a complete manifold, and thus one can find a geodesic ball containing the initial values $W_1,\cdots,W_m$. In this manifold, geodesic balls are convex (see e.g. \cite{afsari}). At each time step two points move towards each other along the geodesic linking them, but as $\gamma_t\leq 1/2$ their updated value lies between their current values, so they remain in the ball  by convexity. Thus, the values belong to a compact set at all times. Moreover the injectivity radius is bounded away from zero, and the gradient is bounded in the ball, so Theorem \ref{thm1} can be applied. $\nabla_{W_m}C=0$   implies that $d(W_{m-1},W_m)=0$  as $\nabla_{W_m}C=-2p_{m-1}\exp_{W_m}^{-1}(W_{m-1})$ (see Appendix). Thus $W_{m-1}$ and $W_m$ a.s. converge to the same value. But as $\nabla_{W_{m-1}}C=0$ this implies $W_{m-2}$ converges a.s. to the same value. By the same token we see all nodes converge to the same value a.s.
\end{proof}

\subsubsection{Simulation results}
As the cone of positive definite matrices $P_+(n)$ is convex, the standard gossip algorithm is well defined on $P_+(n)$. If node $i$ is drawn, it simply consists of the update
\begin{equation}\label{goss2:eq}
W_{i,t+1}=W_{i+1,t+1}=(W_{i,t}+W_{i+1,t})/2
\end{equation}
In the following numerical experiments we let  $n=10$, $m=6$, and nodes are drawn with uniform probability. The step $\gamma_t$ is fixed equal to $1/2$ over the experiment  so that \eqref{goss:eq} can be viewed as a Riemannian gossip algorithm (the condition \eqref{step:eq} is only concerned with the asymptotic behavior of $\gamma_t$ and can thus be satisfied even if $\gamma_t$ is fixed over a finite number of iterations). Simulations show that both algorithms always converge. Convergence of the Riemannian algorithm \eqref{goss:eq} is illustrated in Figure \ref{fig5}.

\begin{figure}
    \centering
    \includegraphics[width=\textwidth]{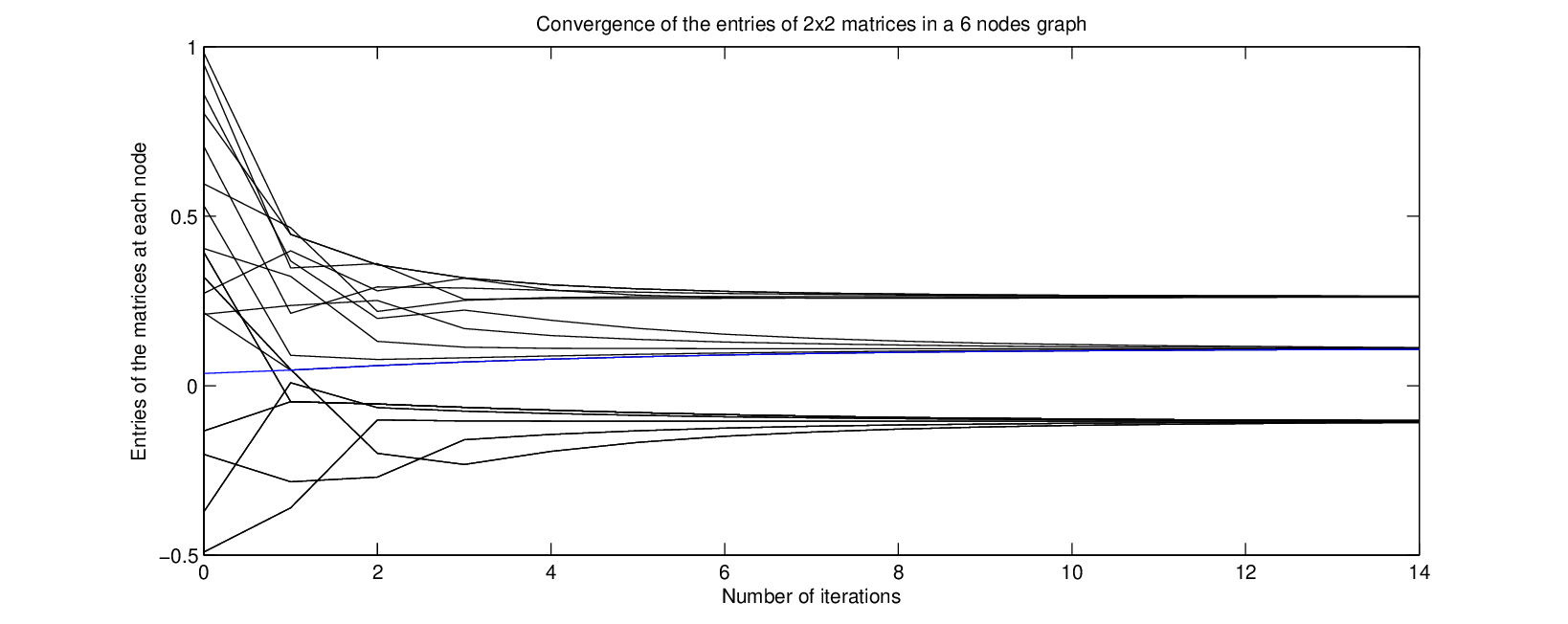}
    \caption{Entries of the matrices at each node versus the number of iterations over a single run. The matrices are of dimension $2\times 2$ and the graph has 6 nodes. Convergence to a common (symmetric) matrix is observed.}  \label{fig5}
\end{figure}

In Figures \ref{fig32} and \ref{fig2}, the two algorithms are compared.  Due to the stochastic nature of the algorithm, the simulation results are averaged over 50 runs. Simulations  show that the Riemannian algorithm \eqref{goss:eq} converges faster in average than the usual gossip algorithm \eqref{goss2:eq}. The convergence is slightly faster when the initial matrices $W_{1,0},\cdots,W_{m,0}$ have approximately  the same norm. But when the initial matrices are far from each other, the Riemannian consensus algorithm outperforms the usual gossip algorithm. In Figure \ref{fig32}, the evolution of the cost $C(W_{1,t},\cdots,W_{m,t})^{1/2}$ is plotted versus the number of iterations. In Figure \ref{fig2},  the diameter of the convex hull of matrices $W_{1,t},\cdots,W_{m,t}$ is considered as an alternative convergence criterion. We see the superiority of the Riemannian algorithm is particularly striking with respect to this convergence criterium. It  can also be  observed in simulations that the Riemannian algorithm is  more robust to outliers. Together with its statistical motivations, and  its invariance and guaranteed convergence properties, it makes it an interesting procedure for  decentralized covariance estimation,  or more generally randomized consensus on $P_+(n)$.

 \begin{figure}[ht!]
    \centering
    \includegraphics[width=\textwidth]{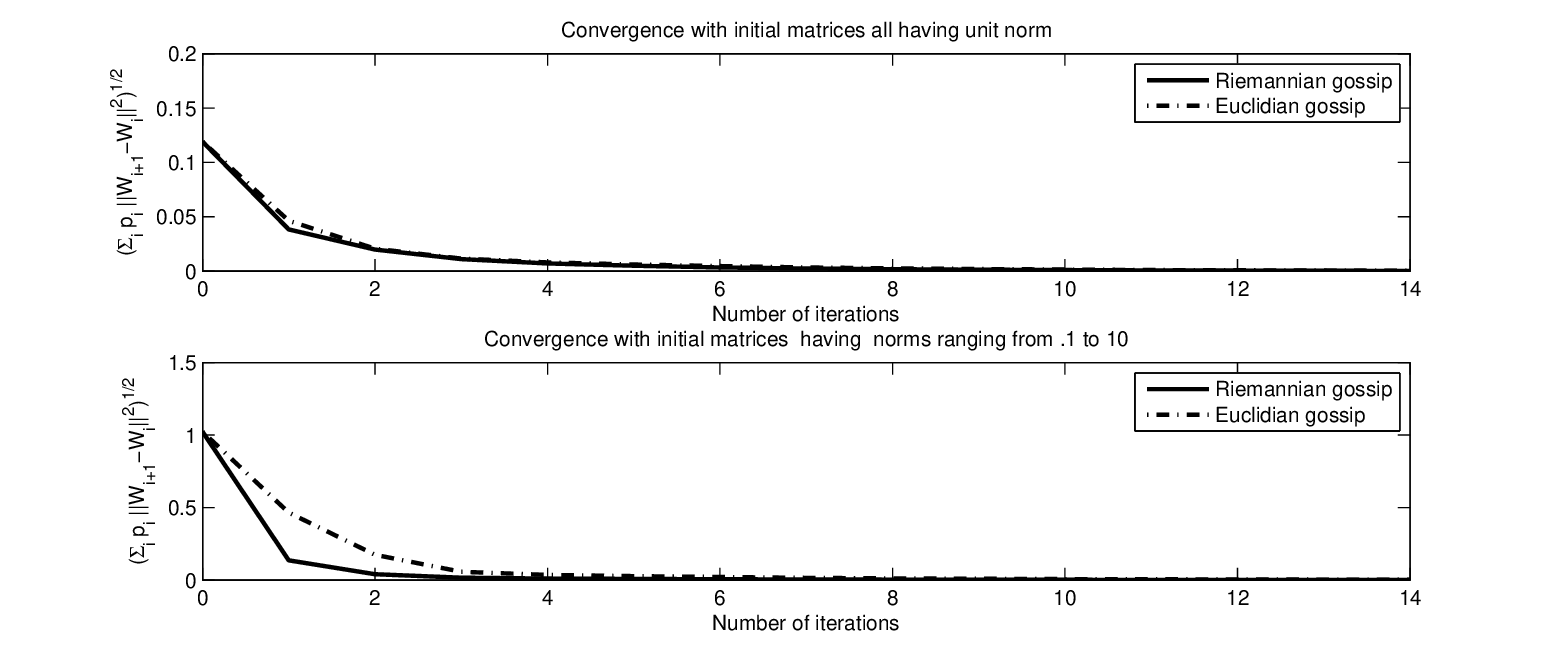}
    \caption{Comparison of Riemannian (solid line) and Euclidian (dashed line) gossip for covariance matrices of dimension $10\times 10$ with a 6 nodes graph. The plotted curves represent the square root of the averaged cost $C(W_{1,t},\cdots,W_{m,t})^{1/2}$   versus the number of iterations, averaged over 50 runs. The Riemannian algorithm converges faster  (top graphics). Its superiority is particularly striking when the nodes have  heterogeneous initial values (bottom graphics).}
    \label{fig32}
\end{figure}

\begin{figure}[ht!]
    \centering
    \includegraphics[width=\textwidth]{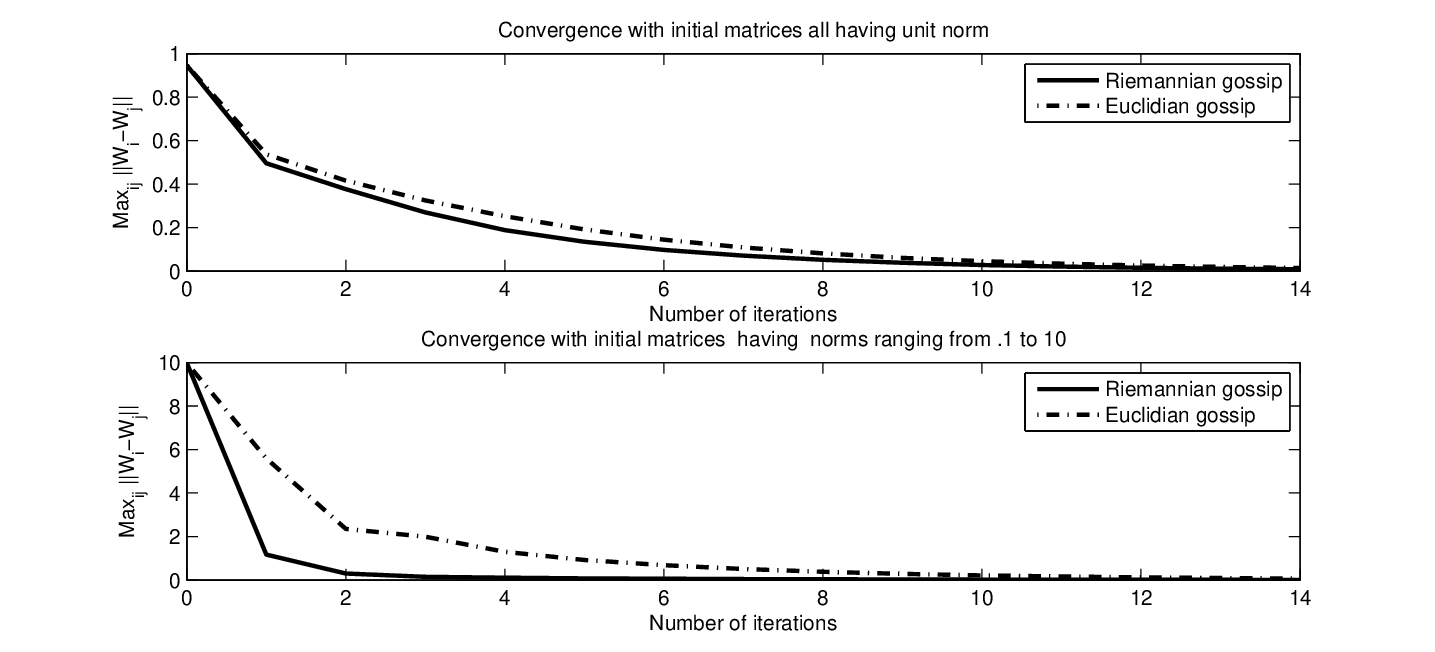}
    \caption{Comparison of Riemannian (solid line) and Euclidian (dashed line) gossip for covariance matrices of dimension $10\times 10$ with a 6 nodes graph with another convergence criterion. The plotted curves represent the diameter of the convex hull $\max_{i,j}\norm{W_{i,t}-W_{j,t}}$   versus the number of iterations, averaged over 50 runs. The Riemannian algorithm converges faster  (top plot). It outperforms the Euclidian algorithm when the nodes have  heterogeneous initial values (bottom plot).}
    \label{fig2}
\end{figure}

\section{Conclusion}

In this paper we proposed a stochastic gradient algorithm on Riemannian manifolds. Under reasonable assumptions the convergence of the algorithm was proved. Moreover the convergence results are proved to hold when a retraction is used, a feature of great practical interest. The approach is versatile, and  potentially applicable to numerous non-linear problems in several fields of research such as control, machine learning, and signal processing, where the manifold  approach is often used either to enforce a constraint, or to derive an intrinsic algorithm.

Another important connection with the literature concerns Amari's natural gradient \cite{amari-98}, a technique that has led to substantial gains in the blind source separation problem, and that can be cast in our framework. Indeed, the idea is to consider successive realizations $z_1,z_2,\cdots$ of a parametric model with parameter $w\in\RR^n$ and joint probability $p(z,w)$. The goal is to estimate online the parameter $w$. Amari proposes to use algorithm  \eqref{manifold:retraction:eq} where the Riemannian metric is the Fisher Information Metric associated to the parametric model,  the loss is the log-likelihood $Q(z,w)=\log p(z,w)$, and the retraction is the mere addition in $\RR^n$. The resulting algorithm, the so-called natural gradient, is proved to be asymptotically efficient, i.e., to reach an asymptotical Cramer-Rao lower bound. Using the true exponential map and thus algorithm  \eqref{manifold:grad:eq} would result in a different (intrinsic) update. In \cite{smith-2005}, S. Smith has proposed an intrinsic Cramer-Rao bound based on the Fisher metric. In future work, one could explore whether the intrinsic algorithm  \eqref{manifold:grad:eq} asymptotically reaches the \emph{intrinsic} Cramer-Rao bound. More details are given in Appendix A.

In the future we would also like to explore two of the aforementioned applications.  First, the matrix completion problem. Proving the convergence of the stochastic gradient algorithm \cite{meyer-icml} requires to study the critical points of the averaged cost function. This leads to prove mathematically involved results on low-rank matrix identifiability, possibly extending the non-trivial work of \cite{candes}. Then, we would like to prove more general results for non-linear consensus on complete manifolds   with a stochastic communication graph. In particular we hope to extend or improve the convergence bounds of the gossip algorithms in the Euclidian case \cite{boyd-gossip} to problems such as the ones described in \cite{sepulchre-08,sarlette-08}, and also to understand to what extent the gossip algorithms for consensus can be faster in a hyperbolic geometry.

\subsection*{Acknowledgements}
The author would like to thank Gilles Meyer and Rodolphe Sepulchre for early collaboration on the subject,  L\'eon Bottou for interesting discussions.

\section*{Appendix A: Links with information geometry}

An important concept in information geometry is the natural gradient. Let us show it is related to the method proposed in this paper.
Suppose now that $z_t$ are realizations of a parametric model with parameter $w\in\RR^n$ and joint probability density function $p(z,w)$. Now let
$$
Q(z,w)=l(z,w)=\log(p(z,w))
$$
be the log-likelihood of the parametric law $p$. If $\hat w$ is an estimator of the true parameter $w^*$ based on $k$ realizations of the process $z_1,\cdots,z_k$ the covariance matrix is larger than the Cramer-Rao bound:
$$
\mathbb E[(\hat w-w^*)(\hat w-w^*)^T]\geq \frac{1}{k} G(w^*)^{-1}
$$
with $G$ the Fisher information matrix $G(w)=-\mathbb E_z [(\nabla_w^El(z,w))(\nabla_w^El(z,w))^T]$ where $\nabla^E$ denotes the conventional gradient in Euclidian spaces. As $G(w)$ is a positive definite matrix it defines a Riemannian structure on the state space $\mathcal M=\RR^n$, known as the Fisher information metric. In this chart the Riemannian gradient of $Q(z,w)$ writes $G^{-1}(w)\nabla_w^E l(z,w)$. As $\mathcal M=\RR^n$, a simple retraction is the addition $R_w(u)=w+u$. Taking $\gamma_t=1/t$ which is compatible with assumption 1, update \eqref{manifold:retraction:eq} writes $w_{t+1}=w_t-\frac{1}{t}G^{-1}(w_t)\nabla_w^E l(z_t,w_t)$. This is the celebrated Amari's natural gradient \cite{amari-98}.

Assuming $w_t$ converges to the true parameter $w^*$ generating the data, Amari
 proves it is an asymptotically efficient estimator. Indeed, letting $V_t=
\mathbb E[(w_t-w^*)(w_t-w^*)^T]$ we have
$$
V_{t+1}=V_t-2\mathbb E[\frac{1}{t}G^{-1}\nabla_w^E l(z_t,w_t)(w_t-w^*)^T]+\frac{1}{t^2}G^{-1}GG^{-1}+O(\frac{1}{t^3})
$$But up to second order terms $\nabla_w^E l(z_t,w_t)=\nabla_w^E l(z_t,w^*)+(\nabla_w^E )^2l(z_t,w^*)(w_t-w^*)$, with $\mathbb E[l(z,w^*)]=0$ as $w^*$ achieves a maximum of the expected log-likelihood where the expectation is with respect to the law $p(z,w^*)$, and  $G(w)=\mathbb E[(\nabla_w^E )^2l(z_t,w)]$ because of the basic properties of the Cramer Rao bound. Finally $
V_{t+1}=V_t-2V_t/t+G^{-1}/t^2$, up to terms  whose average can be neglected. The asymptotic solution of this equation is $V_t=G^{-1}/t+O(1/t^2)$ proving statistical efficiency.

%in ${1}/{t^3},||w_t-w||^3/t,||w_t-w||/t^2$

 It completes our convergence result, proving that when the space is endowed with the Fisher metric, and the trivial retraction is used, the stochastic gradiend method proposed in this paper provides an asymptotically efficient estimator. The natural gradient has been   applied to blind source separation (BSS) and has proved to lead to substantial performance gains.

\cite{smith-2005} has recently derived an intrinsic Cramer-Rao
bound. The bound does not depend on any non-trivial choice of
coordinates, i.e. the estimation error $\norm{\hat w-w}$ is replaced
with the Riemannian distance associated to the Fisher information
metric. In the same way, the usual natural gradient update
$w_{t+1}=w_t-\frac{1}{t}G^{-1}\nabla_w^E l(z_t,w_t)$ could be
replaced with its intrinsic version  \eqref{manifold:grad:eq}
proposed in this paper. It can be conjectured this estimator
achieves Fisher efficiency i.e. reaches the \emph{intrinsic}
Cramer-Rao bound as defined in  \cite{smith-2005}. Such a result in
the theory of information geometry  goes  beyond the scope of this
paper and is left for future research.
\section*{Appendix B: Riemannian  geometry background}

Let $(\mathcal M,g)$ be a connected Riemannian manifold (see e.g. \cite{absil-book} for basic Riemannian geometry definitions).  It carries the structure of a metric space whose distance function is the arc length of a minimizing path between two points. The length $L$ of a curve $c(t)\in\mathcal M$ is defined by
$$
L=\int_a^b\sqrt{g(\dot c(t),\dot c(t))}dt=\int_a^b\norm{\dot c(t)}dt
$$If $y$ is sufficiently close to  $x\in\mathcal M$, there is a unique path of minimal length linking $x$ and $y$. It is called a geodesic. The exponential map is defined as follows: $\exp_x(v)$ is  the point $z\in\mathcal M$ situated on the  geodesic  with initial position-velocity $(x,v)$ at  distance $\norm v$ of $x$. We also define $\exp_x^{-1}(z)=v$. The cut locus of $x$ is roughly speaking the set where the geodesics starting at $x$ stop being paths of minimal length (for example $\pi$ on the circle for $x=0$). The least distance to the cut locus is the so-called injectivity radius $I$ at $x$. A geodesic ball is a ball with radius less than the injectivity radius at its center.

 For $f:\mathcal M\to \RR$ twice continuously differentiable, one can define the Riemannian gradient as the tangent vector at $x$ satisfying $\dotex|_{t=0} f(\exp_x(tv))=\langle v,\nabla f(x)\rangle_g$  and the  hessian as the operator  such that $\dotex|_{t=0}\langle \nabla f(\exp_x(tv)),\nabla f(\exp_x(tv))\rangle_g=2\langle \nabla f(x),(\nabla_x^2f) v\rangle_g.$ For instance, if $f(x)=\frac{1}{2}d^2(p,x)$ is half the squared distance to a point $p$ the Riemannian gradient is $\nabla_xf=\exp_x^{-1}(p)$, i.e.  it is a tangent vector at $x$ collinear to the geodesic linking  $x$ and $p$, with norm $d(p,x)$. Letting $c(t)=\exp_x(tv)$ we have
\begin{align*}
f(c(t))=f(x)&+t\langle v,\nabla f(x)\rangle_g\\&+\int_0^t(t-s)\langle \frac{d}{ds} c(s),(\nabla_{c(s)}^2f) \frac{d}{ds} c(s)\rangle_g ds.
\end{align*}and thus $f(\exp_x(tv))-f(x)\leq t\langle v,\nabla f(x)\rangle_g+\frac{t^2}{2}\norm{v}_g^2 k$, where $k$ is a bound on the hessian along the geodesic.

\bibliographystyle{plain}

\end{document}